\renewcommand{\cite}{\citet}
\theoremstyle{plain}
\newtheorem{theorem}{Theorem}[section]                                          
\newtheorem{lemma}[theorem]{Lemma}
\newtheorem{corollary}[theorem]{Corollary}
\theoremstyle{definition}
\newtheorem{definition}[theorem]{Definition}
\theoremstyle{remark}
\newtheorem{remark}[theorem]{Remark}
\makeatletter \@addtoreset{equation}{section} \makeatother
	\def\N{\mathbb{N}}
	\def\d{\sqrt{2}}
	\def\R{\mathbb{R}}
	\def\H{\mathscr{H}}
	\def\L{\mathscr{L}}
\newcommand{\fracPart}[1]{[\!|#1|\!]_1}
\newcommand{\st}[2]{\mathbf{s}_{#1\!}(#2)}
\newcommand{\ct}[2]{\mathbf{c}_{#1\!}(#2)}
\begin{document}

\title[Noise models on the circumference]{Is the Brownian bridge a good noise model 
on the boundary of a circle?}\thanks{The final publication is available at  {http://www.ism.ac.jp/editsec/aism/}}

\author[G.~Aletti]{Giacomo Aletti}
\author[M.~Ruffini]{Matteo Ruffini}

\address{G.~Aletti\\
              ADAMSS Center \& Dept. of Mathematics ``Federico Enriques''\\ 
              Universit\`a degli Studi di Milano\\
              Via Saldini 50, 20131 Milano} 

\email{giacomo.aletti@unimi.it}

\address{M.~Ruffini\\                          ToolsGroup Spain \\
C/Diputación, 303, \'Atico \\
08009 Barcelona,
Spain} 

\email{mruffini@toolsgroup.com}

\subjclass[2010]{Primary 42A16; Secondary 60B15; 60G15}
\keywords{
{Fourier transform};
{Karhunen-Lo\`eve's theorem};
{Gaussian processes};
{periodic processes};
{stationary processes};
{maximum likelihood}
}

\begin{abstract}
In this paper we study periodical stochastic processes, and we define the conditions 
that are needed by a model to be a good noise model on the circumference. 
The classes of processes that fit the required conditions are studied together with their expansion 
in random Fourier series in order to provide results about their path regularity.
Finally, we discuss a simple and flexible parametric model with prescribed
regularity that is used in applications, and we prove the asymptotic properties
of the maximum likelihood estimates of model parameters. 
\end{abstract}

\subjclass[2010]{Primary 42A16; Secondary 60B15; 60G15}

\keywords{
{Fourier transform};
{Karhunen-Lo\`eve's theorem};
{Gaussian processes};
{periodic processes};
{stationary processes};
{maximum likelihood}
}

\maketitle

\section{Introduction}
\subsection{Literature review}

Modeling the random boundaries of star-shaped planar objects 
is a topic that is receiving an increasing interest in recent times. 
Some examples can be found in neurology (see \cite{HoboltNeurology} and the references therein), geography \cite{Burrough}, stereology (see \cite{HoboltShape,HoboltStereology} and the references therein), fractal geometry analysis (see \cite{DiogAletti} and the references therein). 

A common way to model such a phenomena is to model the radius-vector 
function as a periodic stochastic process from an interval to $\R$; a detailed geometric description of this model is provided in \cite{Lieshout}.
In such a framework, the radius of the star-shaped planar object is a periodic and continuous function, as a function of the independent variable, representing the angle.

A very standard and well known model that apparently suits these needs is the Brownian Bridge.
The Brownian bridge is a universally known model, used in several
areas of applied mathematical science. As only an example, the recent publication of 
\cite{Kroese_HB,Manganaro} and the reference therein provide a huge relevant literature,
while \cite{Bass_SP} provide a theoretical analysis of such a process.
The main aspect of the Brownian bridge is its periodicity, that makes this process
a good model for a noise on the finite domain $[0,1]$. 
On the other hand, a deficiency of this model is its non-stationarity,
which is almost a must when one models pure noise. This is due to the fact that Brownian bridge
is assumed to be $0$ at $t=0$. 

A second approach that is being obtaining success in recent time, is to exploit the asymptotic results of the random Fourier series 
to provide general models for the boundaries of star-shaped objects.
In \cite{HoboltShape}  the authors propose a parametric 
random Fourier series model (called generalized $p$-order model) to describe the border of random 
planar star-shaped objects in terms of normalized radius-vector function. 
Again, in \cite{HoboltShape}, the authors also provided results 
about sample path regularity, and an expression for the maximum likelihood function for the model parameters, 
even if there are not asymptotic results about these estimators.

\subsection{Overview and insights of the paper}
In this paper we deal with the second approach, using random Fourier series as a flexible modeling tool, finding interesting properties of the 
studied processes thanks to the standard representation they provide.

\bigskip

First, we define the theoretical conditions that are needed by a process to be a good noise model on the boundary of a circle, 
admitting models with a fixed zero value in the origin only as 
the conditioning of such a process, as the result of a selective sampling.
More precisely, two classes of processes are considered:
\begin{itemize}
\item $\H$, the set of Gaussian, stationary, $[0,1]$-periodic processes;
\item $\H_0$, the set of processes generated by a process in $\H$ conditioned to be $0$ when $t=0$.
\end{itemize}

Then, we remark that the Brownian Bridge is not contained in $\H_0$. 
Furthermore, we find a standard Fourier decomposition for a process 
$ \{x_t\}_{t\in[0,1]}$ in $\H$, 
by expressing its covariance function $C(s,t)$ as  
\(
C(s,t) = 
c_0^2+2 \sum_{k=1}^{\infty}{c_k^2\cos(2k\pi(s- t))}.
\)
Thanks to Karhunen-Lo\`eve's theorem, the process $ \{x_t\}_{t\in[0,1]}$ may be represented as
$$
x_t = c_0 Y'_0 + \sum_{k=1}^{\infty}c_k
(Y_k\st{k}{t} + Y'_k\ct{k}{t})
$$ 
where
$
\{Y_k\}_{k \geq 1}$ and $\{Y'_k\}_{k \geq 0}$  are two independent sequences of independent standard Gaussian variables. 

\bigskip

As a consequence, we prove that:
\begin{itemize}
\item  the random Fourier series expansion of a process in $\H_0$ shares the same asymptotic behavior for the spectrum with its generator in $\H$;
\item  the path regularity of a process in $\H_0$ depends on path regularity of its generator in $\H$; in particular we show that 
   the regularity properties of the trajectories of a process in $\H $ and of its generated process in $\H_0$ have the same lower bound in terms of H\"{o}lder  regularity;
  \item the path regularity of a process in $\H$ (and of its generated process in $\H_0$) can be deduced by the Fourier coefficients of the generator process
  covariance function, looking at their decrease rate. In particular, we show that, for any $0 < \alpha \leq 1$,  
  $$
  c_k^2 = O({1}/{k^{1+2m+\alpha}})
  \qquad \Longrightarrow \qquad   \{x_t\}_{t\in[0,1]} \in C^{m, \beta} ([0,1]) , \quad \text{ with }
  \beta < {\alpha}/{2},
  $$
where $C^{m, \beta} ([0,1])$ is the H\"{o}lder space 
of the functions on $[0,1]$ having continuous derivatives 
up to order $m$ and such that the $m^{\text{th}}$-derivative is H\"{o}lder continuous with exponent $\beta$.  
\end{itemize}

\begin{figure}
\begin{center}
\includegraphics[width=0.48\textwidth]{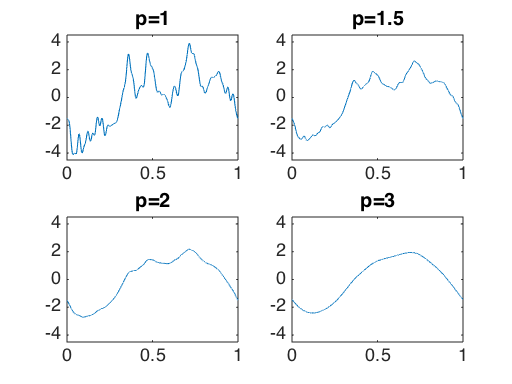} 
\caption{Parametric model in $\H$, where path regularity is determined by the parameter $p$.}\label{fig:0}
\end{center}
\end{figure}
\bigskip

Finally, as in \cite{HoboltShape}, we discuss a general and flexible parametric model in $\H$:
\begin{equation*}
x_t = \sum_{k=1}^{N}\frac{a}{k^{p}}(Y_k\sin(2k\pi t) + Y'_k\cos(2k\pi t)),
\end{equation*} 
together with the generated model in $\H_0$. We underline that
\begin{itemize}
\item  these models provide a very easy way to represent stochastic processes in computer memory, 
where only a finite number of coefficient may be stored. In addition, the representation of the first one 
is built on a finite dimensional subspace made by only trigonometric functions;
\item  the path regularity of $ \{x_t\}_{t\in[0,1]}$ is determined by its parameters, see Figure~\ref{fig:0};
  \item we provide maximum likelihood estimates for the first model, together with asymptotic properties of the estimators.  
\end{itemize}

Summing up, these models can be very useful in the applications: on one hand they might shape 
particular characteristics of the observed phenomena, allowing on the other hand properties similar to the 
Brownian bridge when these are needed, but with a stronger theoretical support.

\subsection{Structure of the paper}

In Section~\ref{sec:2s} we define the two classes of interest, $\H$ and the space of conditioned processes $\H_0$, studying their properties and analyzing their random Fourier series expansion.
In Section~\ref{sec:4s} we show the connection between the spectrum of the processes 
in $\H_0$ with respect to their generators in $\H$.
In Section~\ref{sec:5s} it is proven that also the path regularity is maintained for such
couples, as a consequence of Kolmogorov's continuity conditions and a result of Boas.
In Section~\ref{sec:6s} simple parametric models in $\H$ and $\H_0$ are presented, together with the properties 
of the maximum likelihood estimators for the parameters.

\subsection*{Summary of notations}
The variables $s,t,\ldots$ relate to time variables, and will often belong
to $[0,1]$. We denote by $\{x_t\}_{t\in[0,1]},
\{y_t\}_{t\in[0,1]}, \ldots$ stochastic adapted process
defined on a given filtered space $(\Omega, \mathcal{F}, \{\mathcal{F}_t\}_{t\in[0,1]}, \mathbb{P})$,
while $ \{ X_n\}_{n\geq 1}$, $\{ Y_n \}_{n\geq 1}$, $\{ Z_n\}_{n\geq 1},\ldots$ are sequences of random variables. 
$C(s,t)$ 
is a  positive semidefinite function (it will be the correlation function of a stochastic process).
When a process has stationary increments, its covariance function will often be replaced by 
the associated covariogram function $\tilde{C}(t-s)=C(s,t)$. 
The sequence $\{e_k(t)\}_{k\geq 0}$
denotes a sequence of orthogonal function on $L^2([0,1])$. 
Finally, we denote by $\fracPart{t}$ the fractional part of the real number $t$, that is the sawtooth wave
defined by the formula
$\fracPart{t} = t - \mathrm{floor}(t)$.

\section{Preliminaries and Karhunen-Lo\`eve's decomposition theorem}\label{sec:2s}
In this section we recall some basic results from Gaussian processes theory. The first theorem we need is the Karhunen-Lo\`eve's decomposition theorem (see \cite{Karhunen}), that states what follows.
\begin{theorem}[Karhunen-Lo\`eve]\label{teo:KL}
Let $\{x_t\}_{t\in [0,1]} $, such that
$
E[x_t] \equiv 0,$
and
$
Cov(x_t,x_s) = C(t,s),
$
continuous in both variables. Then 
$
x_t = \sum_{k=1}^{\infty}Z_k\,e_k(t),
$
where 
\begin{itemize}
\item the functions
$\{e_k(\cdot)\}_{k\geq 1}$ are the eigenfunctions of the following integral operator from  $L^2[0,1]$ in itself
\begin{equation}\label{Merc_operator}
f\in L^{2}[0,1]\longrightarrow g(t) = \int_{0}^{1}{C(t,\tau)f(\tau)d\tau},
\end{equation}
and $\{e_k(\cdot)\}_{k\geq 1}$ form an orthonormal basis for the space spanned by the eigenfunctions 
corresponding to nonzero eigenvalues;
\item 
the random variables $Z_1, Z_2, \ldots$ are given by
$
Z_k = \int_{0}^{1}x_te_k(t)dt
$
and form a zero-mean orthogonal system (i.e., $  E(Z_kZ_j) = 0$ for $k\neq j$) with variance $\lambda_k^2$, 
where $\lambda_k$ is the eigenvalue corresponding to the eigenfunction $e_k(\cdot)$.
\end{itemize}
The series $ \sum_{k=1}^{\infty}Z_ke_k(t) $ converges in mean square to $x_t$, uniformly in $t$:
$$
\sup_{t\in[0,1]} E\Big(\big[x_t - \sum_{k=1}^{\infty}Z_ke_k(t)\big]^2\Big) \mathop{\longrightarrow}_{n\to\infty}0.
$$
Finally, $x_t$ is a Gaussian process if and only if $\{{{Z}}_n\}_{n\geq 1}$ is a sequence of independent Gaussian random variables.
\end{theorem}
\subsection{Representation of the set $\H$ with respect to the Fourier basis $\{\st{k}{t}, \ct{k}{t}\}_{k\geq 0}$}
We deal in this paper with the following class $\H$ of processes, thought of as the 
set of `pure Gaussian noises' on the circumference.
\begin{definition}
Let $ \{x_t\}_{t\in[0,1]}$ be a stochastic process with covariance function
$
C(s,t)=Cov(x_t,x_s) 
$.
$ \H$ is the set of real \emph{Gaussian} stochastic processes 
$ \{x_t\}_{t\in[0,1]}$ such that
\begin{description}
\item[zero-mean:] $E(x_t) = 0$, $\forall t\in[0,1]$;
\item[continuously stationary:]
there exists a continuous real function $\tilde{C}$ such that
$ C(s,t) = \tilde{C}(s-t)  $, $\forall s,t\in[0,1]$;
\item[periodic:] $ \{x_t\}_{t\in[0,1]}$ admits 
a periodic extension to $\R$ (i.e. $ x_{0} = x_{1} , a.s.$).
\end{description}
\end{definition}
\begin{remark}\label{rem:1.1}
A necessary and sufficient condition for a continuously stationary process to be
periodic is that $\tilde{C}(1) =\tilde{C}(0) $. This allows a continuous version of the process
with $ Var(x_{t+1}-x_t)=0$ for any $t\in\R$.
 We remark that if  $ \{x_t\}_{t\in[0,1]}\in\H $  and if $ \tilde{C}(s-t) = C(s,t)  $ is its covariogram function, then $ \tilde{C}(t)  =  \tilde{C}(t+1)$.
\end{remark}

The set $ \H$  is a Hilbert space, when it is equipped with the inner product given by
$
\langle x(\cdot),y(\cdot)\rangle = \int_{0}^{1}{E(x_ty_t)dt} .
$
Karhunen-Lo\`eve's decomposition theorem can be specialized to $\H$, in order to show that a process is in $\H$ if and only if it can be written as 
limit of a canonical trigonometric random series, namely the constant function equal to $1$ together with 
the sequence $\{\st{k}{t},\ct{k}{t}\}_{k\geq 1}$,
where $\st{k}{t}=\d\sin(2k\pi t)$ and $\ct{k}{t}=\d\cos(2k\pi t)$.\\
\begin{theorem}\label{2.1}
 Let $ \{x_t\}_{t\in[0,1]} \in \H $ with covariance $C(s,t) = \tilde{C}(t-s)$; then in mean square, uniformly in $t$, 
$$
x_t = c_0 Y'_0 + \sum_{k=1}^{\infty}c_k
(Y_k\st{k}{t} + Y'_k\ct{k}{t})
$$ 
where
$
\{Y_k\}_{k \geq 1}$ and $\{Y'_k\}_{k \geq 0}$ are two independent sequences of independent standard Gaussian variables,
and
$
\{c_k\}_{k\geq 0}\in \ell^2
$
is such that
$$
c_n^2 = \int_{0}^{1}{\tilde{C}(s)\cos(2n\pi s)ds},\qquad n=0,1,2,\ldots
$$
\end{theorem}
\begin{proof}
See Appendix~\ref{appA}. 
\qed\end{proof}
\begin{theorem}\label{teo:2.2}
Let $
\{Y_k\}_{k \geq 1}$ and $\{Y'_k\}_{k \geq 0}$  be two independent sequences of independent standard Gaussian variables,
and $\{c_k\}_{k\geq 0}\in \ell^2 $.
Then the sequence
$$
y^{(n)}_t = c_0Y'_0+\sum_{k=1}^{n}c_k(Y_k
\st{k}{t}+ Y'_k 
\ct{k}{t})
$$ 
converges in mean square, uniformly in $t$ to $\{y_t\}_{t\in[0,1]}\in\H$.
Moreover if $C(s,t)$ is the covariance function of $y_t$ , 
then uniformly, absolutely and in $ L^2[0,1]\times [0,1] $,
\begin{equation}
\begin{aligned}
C(s,t) & = c_0^2+\sum_{k=1}^{\infty}{c_k^2\ct{k}{s}\ct{k}{t}}
+ \sum_{k=1}^{\infty}{c_k^2\st{k}{s}\st{k}{t}}\\
& = c_0^2+ 2 \sum_{k=1}^{\infty}{c_k^2\cos(2k\pi s)\cos(2k\pi t)}
+ 2 \sum_{k=1}^{\infty}{c_k^2\sin(2k\pi s)\sin(2k\pi t)} \\
& =
c_0^2+2 \sum_{k=1}^{\infty}{c_k^2\cos(2k\pi(s- t))}.
\end{aligned}\label{Cov_X}
\end{equation}
\end{theorem}
\begin{proof}
See Appendix~\ref{appA}.
\qed\end{proof}
\begin{remark}\label{oss:per_proc}
As a consequence of Theorem~\ref{2.1} and Theorem~\ref{teo:2.2}, 
we can observe that periodic processes with period $\frac{1}{m}$ 
have only terms of form $mk$ in their expansion:
$$
x_{t+\frac{1}{m}} = c_0 Y'_0 + \sum_{k=1}^{\infty}c_{mk}
(Y_{mk}\st{mk}{t+\tfrac{1}{m}} + Y'_{mk}\ct{mk}{t+\tfrac{1}{m}})= x_t.
$$ 
More fancy processes having only odd terms are antiperiodic 
with period $\frac{1}{2}$, i.e.
$$
x_{t+\frac{1}{2}} = \sum_{k=0}^{\infty}c_{2k+1}
(Y_{2k+1}\st{2k+1}{t+\tfrac{1}{2}} + Y'_{2k+1}\ct{2k+1}{t+\tfrac{1}{2}})= 
-x_t.
$$
\end{remark}

An immediate consequence of this remark is that when one needs to model a pure noise on the boundary of a circle, 
then he must choose processes whose expansion has both odd and even terms.
\subsection{The quotient set $\H_Z$}\label{sec:H_Z}
It is easy to see that $\H$ can be seen as a Hilbert space, isometrically equivalent to the 
space of the coefficients $ \ell^2 $; 
 let us consider two independent sequences 
$\{{{\bar{Y}}}_n\}_{n\geq 1}$ and $\{{{\bar{Y}'}}_n\}_{n\geq 0} $ 
of independent standard Gaussian variables. For each $  \{z_t\}_{t\in[0,1]}\in \H $, 
there exists an $  \{x_t\}_{t\in[0,1]}\in \H_{Z}  $ having the same law, where
\[
\H_{Z} = \Big\{ \{x_t\}_{t\in[0,1]}\in \H : \\
x_t =a_0\bar{Y'}_0+
\sum_{k=1}^{\infty}a_k(\bar{Y}_k\st{k}{t} + \bar{Y'}_k\ct{k}{t})
, \{a_n\}_{n\geq 0}\in \ell^2  \Big\}
\]
and the limit is in mean square and uniformly in $t$.
From Theorem~\ref{2.1} and Theorem~\ref{teo:2.2} it 
is naturally defined an isometry between the representative space $\H_Z$ and $\ell^2$:
$$
x_t =a_0\bar{Y'}_0 + 
\sum_{k=1}^{\infty}a_k(\bar{Y}_k\st{k}{t} + \bar{Y'}_k
\ct{k}{t}) \longleftrightarrow \{a_0, \sqrt{2}a_1, \sqrt{2}a_2, \sqrt{2}a_3,\ldots\}\in \ell^2,
$$
where $\|x_t\|_{\H_{Z}} = \sqrt{a_0^2+ 2
\sum_n a_n^2}$.

\subsection{The space $\H_0$ and its relation with $\H_Z$}
By Theorem~\ref{teo:2.2} given $\{c_i\}_{i\geq 0} \in \ell^2$, there exists a unique $\{x_t\}_{t\in[0,1]}\in\H_Z$, with
covariance function given by
\[
C(s,t) = c_0^2+\sum_{k=1}^{\infty}{c_k^2\ct{k}{s}\ct{k}{t}}.
\]

Let us define the set $\H_0$ of the process generated by those in $\H$ conditioned to be $0$ at $t=0$.
\begin{definition}
Let $\H_0$ be the following set
\begin{multline*}
\H_0 = \{ \{y_t\}_{t\in[0,1]}\colon \exists \{x_t\}_{t\in[0,1]}\in\H \text{ such that }\\
\L((y_{t_1},\ldots,y_{t_n})) = \L((x_{t_1},\ldots,x_{t_n})|x_0 = 0), \quad 
\forall \underline{t}\in[0,1]^n,  n\in\N\}.
\end{multline*} 
We call:
\begin{description}
\item[Generator process:]  the process $ \{x_t\}_{t\in[0,1]}\in\H$;
\item[Generated process:] the process $ \{y_t\}_{t\in[0,1]}\in\H_0$.
 \end{description}
\end{definition}

In other words, the process $\{x_t\}_{t\in[0,1]}\in \H_Z$, conditioned to be $0$ at $t=0$, is the periodic zero-mean
Gaussian process $\{y_t\}_{t\in[0,1]}\in \H_0$ with covariance
function
\begin{equation}\label{eq:RfromC}
R(s,t) = C(s,t) - \frac{C(s,0) C(0,t) }{C(0,0)}.
\end{equation}

It is easy to show that $  \{y_t\}_{t\in[0,1]} \notin \H  $ because it is not stationary.
However, the function \(R(s,t)\) is symmetric, and hence it is the $L^2$-limit of its 2-D Fourier series.
With the notation given above, with $\ct{0}{t}=1$, we get the series expansion:
\begin{equation}\label{eq:Rexp2a}
R(s,t) 
= 
 \sum_{k,j=0}^{\infty}{r^{cc}_{kj}\ct{k}{s}\ct{j}{t}}
+ \sum_{k,j=1}^{\infty}{r^{ss}_{kj}\st{k}{s}\st{j}{t}}
+ \sum_{k=1,j=0}^{\infty}{r^{sc}_{kj}\st{k}{s}\ct{j}{t}}
+ \sum_{k=0,j=1}^{\infty}{r^{cs}_{kj}\ct{k}{s}\st{j}{t}}.
\end{equation}

The following theorem gives a necessary and sufficient condition for a process
$\{y_t\}_{t\in[0,1]}$ with
covariance function
$R(s,t)$ to have a unique process $\{x_t\}_{t\in[0,1]}\in\H_Z$ which generates it. The trivial
case when $R(s,t)=0$ (generated by a constant process) is omitted since 
it is the sole case when the solution is not unique. The proof may be found
in Appendix~\ref{appA}.
\begin{theorem}\label{thm:RtoS}
For any Gaussian process $\{y_t\}_{t\in[0,1]}$ such that
$y_0 = 0$, $E(y_t)=0$ and continuous
covariance function $R(s,t)\neq 0$, there exists a unique (in law) stationary 
process $\{x_t\}_{t\in[0,1]}\in\H_Z$ which generates $\{y_t\}_{t\in[0,1]}$ if and only if the Fourier
coefficients of $R(s,t)$ satisfy:
\begin{itemize}
\item the mixed matrices $\cos-\sin$  and $\sin-\cos$ are null: 
\[
\{r^{cs}_{jk}\}_{j\geq 0,k\geq 1} = \{r^{sc}_{jk}\}_{j\geq 1,k\geq 0} = 0;
\]
\item the $\sin-\sin$ matrix is a non-negative diagonal in $\ell^1$:
\[
\{r^{ss}_{jk}\}_{j,k\geq 1} = 
\begin{pmatrix}
r^{ss}_{11} & 0 & 0 & 0 & \cdots \\
0 & r^{ss}_{22} & 0 & 0 & \cdots \\
0 & 0 & r^{ss}_{33} & 0 & \cdots \\
\hdotsfor{5} 
\end{pmatrix},
\]
with $r^{ss}_{kk}\geq 0$ and $r^{cc}_{00}<\bar{r} =\sum_k r^{ss}_{kk}<\infty$; 
\item defined $r^{ss}_{00}=\frac{r^{cc}_{00}\bar{r}}{\bar{r}-r^{cc}_{00}}$, 
the $\cos-\cos$ matrix is built from the $\sin-\sin$ matrix and $r^{cc}_{00}$:
\[
\{r^{cc}_{jk}\}_{j,k\geq 0} = 
\begin{pmatrix}
r^{ss}_{00} & 0 & 0 & 0 & \cdots \\
0 & r^{ss}_{11} & 0 & 0 & \cdots \\
0 & 0 & r^{ss}_{22} & 0 & \cdots \\
\hdotsfor{5} 
\end{pmatrix}-
\frac{\bar{r}-r^{cc}_{00}}{\bar{r}^2}
\begin{pmatrix}
r^{ss}_{00}r^{ss}_{00} & r^{ss}_{00}r^{ss}_{11} & r^{ss}_{00}r^{ss}_{22} & \cdots\\
r^{ss}_{11}r^{ss}_{00} & r^{ss}_{11}r^{ss}_{11} & r^{ss}_{11}r^{ss}_{22} & \cdots\\
r^{ss}_{22}r^{ss}_{00} & r^{ss}_{22}r^{ss}_{11} & r^{ss}_{22}r^{ss}_{22} & \cdots\\
\hdotsfor{4} 
\end{pmatrix}.
\]
\end{itemize}
\end{theorem}

\begin{remark}
One of the models mainly used for periodic noise is the Brownian bridge, i.e.\ 
the process  $ \{B_t\}_{t\in[0,1]} $ such that 
$
B_t = W_t - tW_1,
$
where $ \{W_t\}_{t\in[0,1]} $ is a Brownian motion. 
This  process is Gaussian, periodic and has the following standard representation in Fourier random series:
$$
B_t = \sum_{k=1}^\infty Z_k \frac{\sqrt{2} \sin(k \pi t)}{k \pi},
$$
where $Z_1, Z_2, \ldots$ are independent identically distributed standard normal random variables. Starting from the results explained in this section, it is
straightforward to prove that $\{B_t\}_{t\in[0,1]} \notin \H_0$; so we cannot consider it a ``good'' noise model on the boundary of a circle.
\end{remark}

\section{A process in $\H_0$ shares the same asymptotic behavior for the spectrum with its generator}\label{sec:4s}

We want to get information about Fourier coefficients of Karhunen-Lo\`eve expansion  for processes in 
$\H_0$ with respect to the coefficients of their generators in $\H$. To do this, as described in the 
Theorem~\ref{teo:KL},  it is sufficient to study the spectrum of the integral operator induced by the covariance function of the process $  \{y_t\}_{t\in[0,1]}\in \H_0$ 
generated by $ \{x_t\}_{t\in[0,1]}\in\H$.

\begin{theorem}\label{teo:Asympthotic}
Denote by
$ \{y_t\}_{t\in[0,1]}$ a process in $\H_0 $ and by $\{x_t\}_{t\in[0,1]}$ its generator in $\H $. 
Let $ \{c_n\}_{n\geq 0}  \in \ell^2$ be the sequence of Fourier coefficients of Karhunen-Lo\`eve expansion 
of the process $ \{x_t\}_{t\in[0,1]}$, such that, as in Theorem~\ref{2.1} and Theorem~\ref{teo:2.2},
$$
x_t = c_0 Y'_0 + \sum_{k=1}^{\infty}c_k
(Y_k\st{k}{t} + Y'_k\ct{k}{t}). 
$$
Then the Karhunen-Lo\`eve expansion of the process $ \{y_t\}_{t\in[0,1]}$ has the following form:
$$
y_t =   \sum_{k=0}^{\infty}(c_kY_k\st{k}{t} +   \tilde{c}_k  Y'_k f_{k}(t))
$$
where $f_{k}(t)$ is the eigenfunction related to the eigenvalue $\tilde{a}_n=\tilde{c}_n^2$, and, for all $n\in\N$,
\begin{align*}
a_{{k_n}} & = \tilde{a}_{k_n} = a_{k_{n+1}} && \text{if } a_{k_n}=a_{k_{n+1}} \\
a_{{k_n}} & > \tilde{a}_{k_n} > a_{k_{n+1}} && \text{if } a_{k_n}=a_{k_{n+1}} 
\end{align*}
where $\{a_{k_n}\}_{n\geq 0}$ is a decreasing reordering of the sequence $\{a_{n}\}_{n\geq 0}$.
\end{theorem}
\begin{proof}
See Appendix~\ref{appB1}.
\qed\end{proof}
\begin{remark}
Theorem~\ref{thm:RtoS} and Theorem~\ref{teo:Asympthotic} give a theoretical approach to build the Karhunen-Lo\`eve expansion 
of processes in $\H_0$. 
A numerical example of such a procedure may be fond in the example of the Section~\ref{sec:6s}.
\end{remark}
\section{ A process in $\H_0$ shares the same path regularity properties with its generator}\label{sec:5s}
We showed in Theorem~\ref{teo:Asympthotic} that a process in $\H_0$ and its generator in $\H$
share the same asymptotic behavior for the spectrum. 
In this section, we show that the regularity of the paths is also maintained.
\subsection{H\"{o}lder regularity of the paths of processes in $\H$ and in $\H_0$}
We first remind that the H\"{o}lder space $C^{m, \alpha} ([0,1])$, where  $m\geq 0$ is an integer
and $0 < \alpha \leq 1$, consists of those functions on $[0,1]$ having continuous derivatives 
up to order $m$ and such that the $m^{\text{th}}$-derivative is H\"{o}lder continuous with exponent $\alpha$. 
We recall a classic regularity theorem.
\begin{theorem}[Kolmogorov-Centsov continuity criterion, \cite{Revuz}]\label{teo:2.3}
Let $ \{x_t\}_{t\in[0,1]} $ be a real stochastic process such that there exist three positive 
constants $ \gamma $, $c$ and $ \epsilon $ so that 
$$
E\big(|x_t-x_s|^{\gamma}\big) \leq c|t-s|^{1+\epsilon};
$$
then there exists a modification $ \{\tilde{x}_{t}\}_{t\in[0,1]}  $ of $\{x_t\}_{t\in[0,1]}$, such that
$$
E\Big(\Big(\sup_{s\neq t}{\frac{|\tilde{x}_t-\tilde{x}_s|}{|t-s|^{\alpha}}}
\Big)^{\gamma} \Big)<\infty
$$
for all $ \alpha\in [0,\frac{\epsilon}{\gamma}) $; in particular the trajectories of $ \{\tilde{x}_{t}\}_{t\in[0,1]}  $
belongs to $ C^{0, \alpha} ([0,1]) $.
\end{theorem}
 
The following results are an immediate consequence of this last theorem  
(proofs may be found in Appendix~\ref{appC}), where the processes $ \{\tilde{x}_t\}_{t\in [0,1]} $ and
$ \{\tilde{y}_t\}_{t\in [0,1]} $ are thought modified as in the Theorem~\ref{teo:2.3}.
\begin{theorem}\label{teo:2.4}
 Let  $ \{x_t\}_{t\in [0,1]} $, a stationary stochastic process with null expectation, and 
let $R(s,t)$ be its covariance function; 
 if $ R  \in C^{0, \alpha} ([0,1]\times [0,1])$, with $0 < \alpha \leq 1$,
 then almost all trajectories of 
$ \{x_t\}_{t\in[0,1]} $ belong to $  C^{0, \beta} ([0,1])  $ with $ \beta < \frac{\alpha}{2} $.
\end{theorem}

It is simple to apply this last theorem to processes laying in  $ \H $ and in $\H_0$: assume that $ \{x_t\}_{t\in [0,1]}\in\H $ and 
let $C(s,t) = \tilde{C}(s-t)$ be its covariance function. If 
$\tilde{C} \in C^{0, \alpha} ([0,1])$, then almost all trajectories of  $ \{\tilde{x}_t\}_{t\in[0,1]} $ 
belong to $ C^{0, \beta} ([0,1]) $, for any $ \beta < \frac{\alpha}{2} $. The same argument can be applied to $\H_0$ processes.
 
In fact we can say something more.
\begin{theorem}\label{teo:RAndC}
Let $ \{x_t\}_{t\in [0,1]}\in\H $ and 
let $C(s,t) = \tilde{C}(s-t)$ be its covariance function.  Consider 
its generated process $ \{y_t\}_{t\in [0,1]}\in\H_0 $, and let  
$
R(s,t)
$
be its covariance function.
Then we have, for any $ \beta < \frac{\alpha}{2} $,
\[
\tilde{C}  \in C^{0, \alpha} ( [0,1])\Rightarrow R  \in C^{0, \alpha} ([0,1]\times [0,1]) \Rightarrow  
\left\{
\begin{matrix}
\{\tilde{y}_t\}_{t\in[0,1]} \in C^{0, \beta} ([0,1]) \\
\{\tilde{x}_t\}_{t\in[0,1]} \in C^{0, \beta} ([0,1])  
\end{matrix}
\right.
\]
\end{theorem}
 
This last result implies that regularity properties of almost all trajectories of  
$\{\tilde{x}_t\}_{t\in[0,1]}   $ and of its generated process
$ \{\tilde{y}_t\}_{t\in[0,1]} $ have the same lower bound, 
obtained by studying regularity of their covariance function.

\subsection{Upper order regularity }
In Section~\ref{sec:H_Z}, a sequence in $\ell^2$ is uniquely associated 
to each stochastic process in $\H$. We are now showing how the decrease rate of such sequence 
ia associated with the regularity of the process trajectory path.

A very useful result for our analysis will be the following one, whose proof may be found in
 \cite{Boas}.
\begin{theorem}[Boas' Theorem]
Let $f\in L^1[0,1]$ be a function whose Fourier expansion has only nonnegative cosine terms, 
and let $ \{a_n\}_{n\geq 0} $ be the sequence of its cosine coefficient. Then
$$
f\in C^{0, \alpha} ([0,1]) \Longleftrightarrow  a_k  = 
O\Big(\frac{1}{k^{\alpha+1}}\Big).
$$
\end{theorem}

Boas' Theorem may be used in connection with 
Theorem~\ref{2.1} and Theorem~\ref{teo:2.2} to deduce 
more regularity properties of the processes in $ \H $, since $\tilde{C}$ is
a function whose Fourier expansion has only nonnegative cosine terms.
In fact, take $\{c_n\}_{n\geq 0}$ as in Theorem~\ref{2.1} and Theorem~\ref{teo:2.2}.
From Boas' Theorem we have that 
if
$
k^2c_k^2 = O(\frac{1}{k^{1+\alpha}})
$
for $ 0<\alpha\leq 1 $, then $ \tilde{C} \in C^{2, \alpha} ([0,1]) $ . 
This link between the regularity of $\tilde{C}$ and the paths of $ \{x_t\}_{t\in[0,1]} $
is underlined in the following theorem. The proof is in the Appendix~\ref{appC}.
\begin{theorem}\label{teo:5.4}
With the notations of Theorem~\ref{teo:2.2},
if $c_k^2 = O(\frac{1}{k^{3+\alpha}})$, then there exists a version of $ \{x_t\}_{t\in[0,1]} $ whose trajectories belong to $ C^{1, \beta} ([0,1]) $, with $ \beta < \frac{\alpha}{2} $.
\end{theorem}

A natural generalization of this result is the following corollary, exemplified in  Figure~\ref{fig:1}.
\begin{corollary}\label{cor:1}
With the notations of Theorem~\ref{teo:2.2},
if there exist an $m\in\N$ such that $c_k^2 = O({1}/{k^{1+2m+\alpha}})$ 
 then there exists a version of  $ \{x_t\}_{t\in[0,1]} $ whose trajectories belongs to $ C^{m, \beta} ([0,1]) $, with $ \beta < \frac{\alpha}{2} $.
\end{corollary}

\begin{figure}
\begin{center}
\includegraphics[width=0.48\textwidth]{new_fig1.png}
\includegraphics[width=0.48\textwidth]{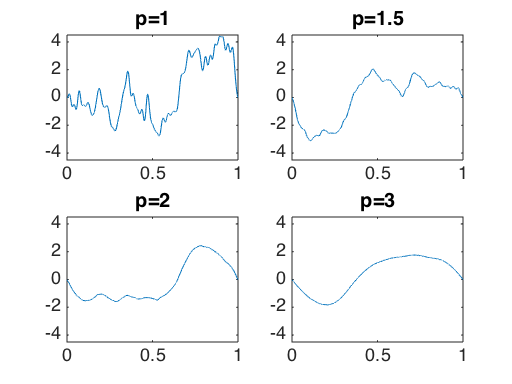}
\caption{Change of path regularity:
a comparison between trajectories of processes for fixed i.i.d.\ gaussian $\{Y_k,Y_k'\}_{k\geq 0}$ 
and varying the magnitude of $\{c_k\}_{k\geq 0}$.
Left: parametric model in $\H$ given in \eqref{model}, where $a=1.5$ (dilation coefficient, fixed)
and varying $p$.
Right: parametric model in $\H_0$ given in \eqref{modelH0}, obtained by conditioning the model \eqref{model} 
to be $0$ at $t=0$. The coefficients $\{\tilde{c}_k\}_{k\geq 0} $ 
and the eigenbase $ \{ f_{k} \}_{k\geq 0}$ of the model 
are obtained as explained in Section~\ref{sec:6s}. 
As a consequence of the Corollary~\ref{cor:1}, each trajectory belongs to $ C^{m, \beta} ([0,1]) $,
where $m+ \beta< 2p - 1$. The series are truncated at $N=40$.}\label{fig:1}
\end{center}
\end{figure}

\section{A parametric model in $\H$ and in $\H_0$}\label{sec:6s}

Results provided in this paper suggests to create a Gaussian parametric family of stationary
and periodic processes of arbitrary 
regularity. In fact, let us consider the following family of processes in $\H$:
\begin{equation}\label{model}
x_t = \sum_{k=1}^{N}\frac{a}{k^{p}}(Y_k\sin(2k\pi t) + Y'_k\cos(2k\pi t)).
\end{equation} 

This family is the discrete approximation of the model given in \cite{HoboltShape},
obtained when $N$ goes to infinite.
We note that for this limiting process,
Theorem~\ref{teo:2.4} states that
the paths become more regular as $p$ increases. 
This property is shown in Figure~\ref{fig:1} (left), which 
suggests how to smooth a process by changing $p$. 

By Theorem~\ref{thm:RtoS}, it is possible to build a parametric model in $\H_0$ of the form given in the
Theorem~\ref{teo:Asympthotic}
\begin{equation}\label{modelH0}
x_t = \sum_{k=1}^{N}\frac{a}{k^{p}}Y_k\sin(2k\pi t) +\sum_{k=0}^{N}  \tilde{c}_k Y'_k f_k(t).
\end{equation} 
The functions $\{f_k(t)\}_{k\geq 0}$ are the eigenfunctions of the $\cos-\cos$ part of the covariance function $R(s,t)$ given in
\eqref{eq:Rexp2a}. To find an approximation of these first eigenfunctions, given 
\begin{equation*}
R^{cc}(s,t) = 
 \sum_{k,j=0}^{N}{R^{cc}_{kj}\ct{k}{s}\ct{j}{t}},
\end{equation*}
we may find the spectral representation of the $\cos-\cos$ matrix $R^{cc} = U D U^T$, with $D$ diagonal and $U$ unitary.
Then $ \tilde{c}_k = \sqrt{D_{kk}}$ and $f_k(t) = \sum_{j=0}^N U_{jk}\ct{j}{t}$.

Model \eqref{model} gives a family of Gaussian processes. 
In application, maximum likelihood estimates of $a$ and $p$ is a straightforward consequence 
of a fast Fourier transform of the
observed discretized process $\{x_t\}_{t\in[0,i/n]}$, $i=0,\ldots,n$. 
The properties of these estimators are studied in the following section.


\subsection{Maximum likelihood estimators of {\eqref{model}}}\label{sec:ML_estim}

Given $(x_{t_0},x_{t_1},\ldots,x_{t_{n}})$ sampled from \eqref{model}, 
we want to find the property of the maximum likelihood estimator 
$(\hat{a},\hat{p})$ of the parameters $(a,p)$.

More precisely, with a equispaced or nonequispaced Fourier transform (see, e.g.,  \cite{FFT:book,FFT_nonUniform}), 
we first transform $(x_{t_0},x_{t_1},\ldots,x_{t_{n}})$ into $(y_1^{(1)},y_2^{(1)},\ldots,y_n^{(1)})$ and
$(y_1^{(2)},y_2^{(2)},\ldots,y_n^{(2)})$ (real and imaginary part). 
As a consequence of Theorem~\ref{2.1} applied to \eqref{model}, there exist 
two sequences $
\{Y_k\}_{k \geq 1}$ and $\{Y'_k\}_{k \geq 1}$  of independent Gaussian standard
random variables such that 
\[
\begin{matrix}
y_1^{(1)}=aY_1,& y_2^{(1)}=\frac{a}{2^p}Y_2, &
\ldots, &y_n^{(1)}=\frac{a}{n^p}Y_n, 
\\
y_1^{(2)}= aY'_1, & y_2^{(2)}=\frac{a}{2^p}Y'_2, &
\ldots, &y_n^{(2)}=\frac{a}{n^p}Y'_n.
\end{matrix}
\]
The log-likelihood function then reads
\begin{align}
\ell_n(a,p) 
& 
= \sum_{k=1}^n \log\Bigg(  \tfrac{1}{\sqrt{2\pi \tfrac{a^2}{k^{2p}}}} \exp\Big({-\tfrac{1}{2} \frac{(y_k^{(1)})^2 }{ \tfrac{a^2}{k^{2p}} } }\Big) \Bigg)
+ 
\sum_{k=1}^n \log\Bigg(  \tfrac{1}{\sqrt{2\pi \tfrac{a^2}{k^{2p}}}} \exp\Big({-\tfrac{1}{2} \frac{(y_k^{(2)})^2 }{ \tfrac{a^2}{k^{2p}} } } \Big)\Bigg) \notag 
\\
& 
= -n\log(2\pi) -2n\log(a) +2p\sum_{k=1}^n\log(k) \notag 
-\frac{1}{2a^2}
\sum_{k=1}^n k^{2p} \big((y_k^{(1)})^2+(y_k^{(2)})^2\big) \notag 
\intertext{and hence, if $o_k= (y_k^{(1)})^2+(y_k^{(2)})^2$, $k=1,\ldots,n$, we get}
\notag
\frac{\partial \ell_n}{\partial a} & = -\frac{2n}{a} + \frac{1}{a^{3}}  
\sum_{k=1}^n k^{2p} o_k 
\\
\frac{\partial \ell_n}{\partial p} 
& 
= 
2\sum_{k=1}^n\log(k) 
-\frac{1}{a^2}
\sum_{k=1}^n \log(k) k^{2p} o_k =
\sum_{k=1}^n 
\log(k)\Big( 2
-\frac{k^{2p} o_k }{a^2}
\Big) \label{eq:MLE_p}
\end{align}
As expected, when $p_0$ is a known parameter,
\[
\hat{a}^2 = \frac{1}{2n} 
\sum_{k=1}^n k^{2p_0} o_k, \qquad 
2n \tfrac{\hat{a}^2}{a_0^2} \sim \chi^2_{2n},
\]
where $\chi^2_{2n}$ is a chi-square distribution with $2n$ degree of freedom,
while nothing is known about the distribution of $\hat{p}$, for small $n$, and
for the distribution of  the couple $(\hat{a},\hat{p})$.
We have the following asymptotic results, whose proof may be found in Appendix~\ref{appD}.
\begin{theorem}\label{model:asympt}
There exists an ML estimator $\{\hat{p}_n\}_{n\geq 1}$, zero of the equation \eqref{eq:MLE_p},
such that
\[
\hat{p}_n \mathop{\longrightarrow}\limits^{a.s.}_{n\to\infty} p_0, \qquad
\frac{\hat{p}_n-p_0}{2 \sqrt{\sum_1^n \log^2(k)}} \mathop{\longrightarrow}\limits^{L}_{n\to\infty}
N(0,1).
\]
Moreover,
\begin{equation}\label{eq:Zcorr}
2\begin{pmatrix}
\frac{\sqrt{n}}{a_0} 
& -\frac{\sum_{k=1}^n 
\log(k)}{\sqrt{n}} 
\\
-\frac{\sum_{k=1}^n 
\log(k)}{a_0\sqrt{\sum_{k=1}^n 
\log^2(k)}} 
 & 
\sqrt{\sum_{k=1}^n \log^2(k)}
\end{pmatrix}
\begin{pmatrix}
\hat{a}_n-a_0
\\
\hat{p}_n-p_0
\end{pmatrix}
\mathop{\longrightarrow}\limits^{L}_{n\to\infty}
\begin{pmatrix}
1
\\
-1
\end{pmatrix}
Z,
\end{equation}
where $Z$ is a standard Gaussian variable.
\end{theorem}

\begin{figure}[h!]
\begin{center}
\includegraphics[width=\textwidth]{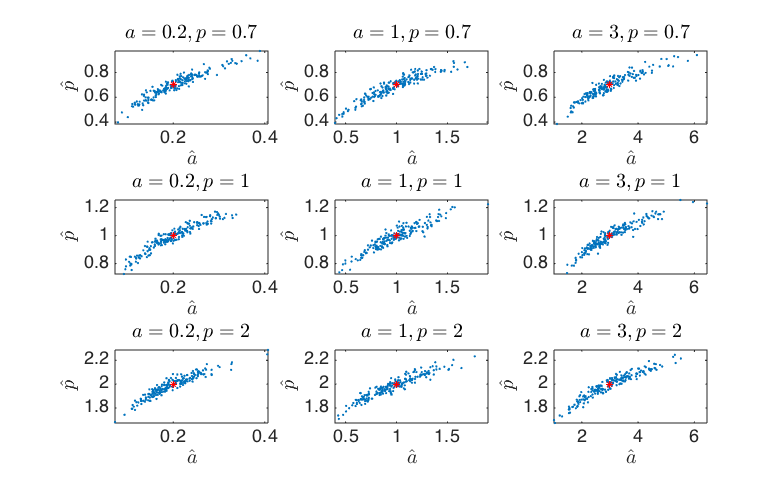}
\caption{Plot of $200$ $(\hat{a},\hat{p})$-joint simulations (blue point) of data coming from
\eqref{model} for different values of $a$ and $p$ (red stars). 
In these pictures, $n=40$ (see Section~\ref{sec:ML_estim} for notations).}\label{fig:2}
\end{center}
\end{figure}

As a corollary of 
Theorem~\ref{model:asympt}, the joint perfect correlation between $\hat{a}_n$
and $\hat{p}_n$ is asymptotically predicted.
In Figure~\ref{fig:2} we show this fact by plotting the maximum likelihood
estimates of $200$ simulated processes from model \eqref{model}, where the 
correlation coefficient $\rho>0.94$ for $n=40$ and different values of $a_0$ and $p_0$. In 
Figure~\ref{fig:3}, we plot the corresponding computed
left-hand part of \eqref{eq:Zcorr}.

\begin{figure}[h!]
\begin{center}
\includegraphics[width=\textwidth]{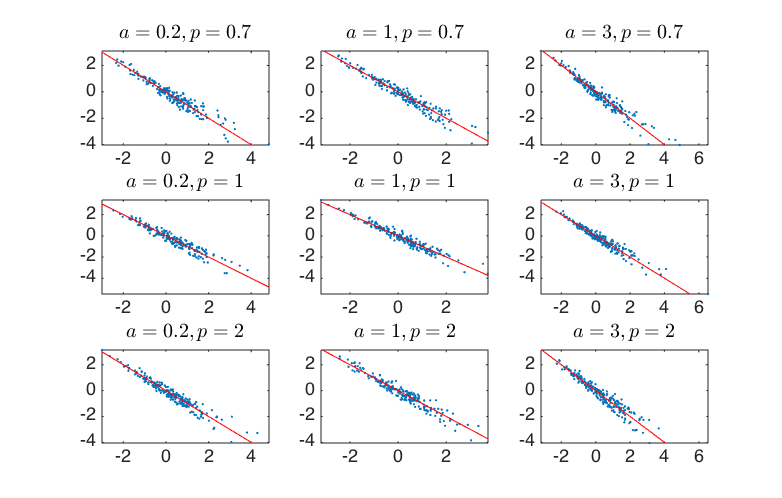}
\caption{Theorem~\ref{model:asympt} predicts that the left-hand part of \eqref{eq:Zcorr} transforms data from Figure~\ref{fig:2} into 
i.i.d.\ vectors with a gaussian distribution concentrated on $y=-x$ (red line).}\label{fig:3}
\end{center}
\end{figure}
\section{Conclusions and perspectives}
The results we presented in this paper, together with the parametric models \eqref{model} and \eqref{modelH0}, are useful to represent the border of circular objects where random noise is present.
The statistical results might help the practitioners to estimate the model parameters; confidence intervals may be found
when dealing with large populations of objects.

A practical advantage of this model is its computer usability: often stochastic processes are represented in computers 
as discrete values, and only a subset of
their Fourier coefficients are available; these models overcome this issue, allowing to find the parameters that best fit the represented process.

In conclusion, the parametric models 
might be considered in the applications more appropriate alternative than the Brownian Bridge. 
Both of the models have a more solid theoretical background,
and they present more flexible in terms of path regularity.

A natural perspective would be to extend the presented results in order to model the shape of the border of a generic $d$-dimensional 
star-shaped object, starting
from the three dimensional case, where the Fourier basis is more treatable, maintaining the usability of the models.

\appendix
\section{Proofs of results of Section~\ref{sec:2s}}\label{appA}

\begin{proof}[Proof of the Theorem~\ref{2.1}]
By Mercer Theorem (see, e.g., \cite{ash}) we know that if $ \{e_n\}_{n\geq 0} $ is an orthonormal basis for the space spanned by the 
eigenfunctions corresponding to nonzero eigenvalues of the integral operator  \eqref{Merc_operator}
then, uniformly, absolutely and in $ L^2[0,1]\times [0,1] $,
\begin{equation}\label{Merc_operator2}
C(s,t) = \sum_{k=0}^{\infty}{e_k(t)e_k(s)\lambda_k},
\end{equation}
where $\lambda_k$ is the eigenvalue corresponding to $e_k$.
By hypothesis, since $C(s,t) = \tilde{C}(|t-s|) = 
\tilde{C}(|t-s|+1)$ by Remark~\ref{rem:1.1}, we get
\begin{equation}\label{eq:CvsS_C}
\int_{0}^{1}{\tilde{C}(s)\cos(2n\pi s)ds} = a_n, \qquad 
\int_{0}^{1}{\tilde{C}(s)\sin(2n\pi s)ds} = 0,
\end{equation}
and hence
\begin{equation}\label{eq:Ctilde}
\tilde{C}(\tau) = a_0 + 2 \sum_{n=0}^\infty a_n \cos(2 n \pi \tau).
\end{equation}
It is simple to prove that the sequence
$ \{\st{n}{t},\ct{n}{t}\}_{n\geq 0} $ contains all the
eigenfunctions of the operator \eqref{Merc_operator}. 
In fact,
\begin{equation}\label{eq:CvsCn}
\begin{aligned}
\int_{0}^{1}{C}(t,\tau)&\ct{n}{\tau} d\tau
=  \d\int_{0}^{1}{\tilde{C}(s)\cos(2n\pi (t+s))dt} 
\\
& =  
\ct{n}{t}\int_{0}^{1}{\cos(2n\pi s)\tilde{C}(s)ds}
- \st{n}{t}\int_{-1/2}^{1/2}{\sin(2n\pi s)\tilde{C}(s)ds} \\
& =  
a_n\ct{n}{t},
\end{aligned}
\end{equation}
the same relation holding when $\ct{n}{t}$ is replaced by $\st{n}{t}$.
By \eqref{eq:Ctilde}, we get
\begin{align*}
C(s,t) & = \tilde{C}(s-t) =
a_0+ \sum_{k=1}^{\infty}{a_k\cos(2k\pi(s- t))}
\\
& = a_0+ 2 \sum_{k=1}^{\infty}{a_k\cos(2k\pi s)\cos(2k\pi t)}
+ 2 \sum_{k=1}^{\infty}{a_k\sin(2k\pi s)\sin(2k\pi t)} \\
& = a_0+\sum_{k=1}^{\infty}{a_k\ct{k}{s}\ct{k}{t}}
+ \sum_{k=1}^{\infty}{a_k\st{k}{s}\st{k}{t}}
\end{align*}
where this equality holds uniformly, absolutely and in $ L^2[0,1]\times [0,1] $
by Mercer Theorem (cfr.\ \eqref{Merc_operator2}).

Now, since $C(s,t)$ is a covariance function, it is positively definite, and hence
$a_n\geq 0$, $\forall n$. Moreover, since $ \{a_n\}_{n\geq 0}\in\ell^1 $,
if we define $c_n = \sqrt{a_n}$, then $ \{c_n\}_{n\geq 0} \in \ell^2$.
From Theorem~\ref{teo:KL} we deduce the existence  
of two independent sequences of independent standard Gaussian variables
$
\{Y_k\}_{k \geq 1}$ and $\{Y'_k\}_{k \geq 0}$ 
such that
in mean square, uniformly in  $ t $
$$
x_t = c_0Y'_0+\sum_{k=1}^{\infty}c_k(Y_k 
\st{k}{t} + Y'_k
\ct{k}{t}). 
$$ 
\qed\end{proof}
\begin{proof}[Proof of the Theorem~\ref{teo:2.2}]
The sequence of Gaussian processes $ y^{(n)}_t $ converges to a periodical 
$ \{y_t\}_{t\in[0,1]} $ in mean square uniformly in $t$, since it is a Cauchy sequence:
$$
\sup_{t\in[0,1]}E[| y^{(n)}_t - y^{(m)}_t |^{2}] =2 
\sum_{k=n}^{m}c_k^2
\mathop{\longrightarrow}_{m,n\to\infty} 0.
$$
Hence, 
$ E[y_t] \equiv 0 $, and 
$$
Cov(y_t,y_s) = c_0^2+
\sum_{k=1}^{\infty}{c_k^2\cos(2k\pi (s-t))}
$$
is a continuous function.
Finally, $  \{y_t\}_{t\in[0,1]} $ is a Gaussian process, since the
two sequences $
\{Y_k\}_{k \geq 1}$ and $\{Y'_k\}_{k \geq 0}$ are formed by independent Gaussian variables.
\qed\end{proof}

\begin{proof}[Proof of the Theorem~\ref{thm:RtoS}]
\emph{Necessity.} Assume there exists a process $\{x_t\}_{t\in[0,1]}\in \H_Z$ which generates $\{y_t\}_{t\in[0,1]}\in \H_0$.
The covariance function $C(s,t)$ of $\{x_t\}_{t\in[0,1]}$ is given as in \eqref{Cov_X}:
\[
C(s,t) = c_0^2+\sum_{k=1}^{\infty}{c_k^2\ct{k}{s}\ct{k}{t}}
+ \sum_{k=1}^{\infty}{c_k^2\st{k}{s}\st{k}{t}}.
\]
If we define 
$x={{C}(0,0)} = \sum_0^\infty c_k^2$, $p_i = c_i^2/x$,  and
\[
D(s,t) = \frac{C(s,t)}{x} = p_0+\sum_{k=1}^{\infty}{p_k\ct{k}{s}\ct{k}{t}}
+ \sum_{k=1}^{\infty}{p_k\st{k}{s}\st{k}{t}},
\]
then, $x>0$ and, by \eqref{eq:RfromC}, we obtain
\begin{equation}\label{eq:Rexp1}
\begin{aligned}
x R(s,t) & =  {D}(s,t) - {D}(0,t) {D}(s,0) \\
&= 
p_0+\sum_{k=1}^{\infty}{p_k\ct{k}{s}\ct{k}{t}}
+ \sum_{k=1}^{\infty}{p_k\st{k}{s}\st{k}{t}} 
- \Big(p_0+\sum_{k=1}^{\infty}p_k\ct{k}{s}
\Big)\Big(
p_0+\sum_{k=1}^{\infty}p_k\ct{k}{t}
\Big).
\end{aligned}
\end{equation}
\eqref{eq:Rexp1} and \eqref{eq:Rexp2a} give
\begin{align}
\label{eq:Rss_fin}
x r^{ss}_{kj} & = 
\begin{cases}
p_k & \text{if }k=j >0\\ 
0 & \text{if }k\neq j 
\end{cases}\\
\label{eq:Rcc_fin}
x r^{cc}_{kj} & = 
\begin{cases}
p_k - p_k^2 & \text{if }k=j \geq 0\\
-p_kp_j & \text{if }k\neq j 
\end{cases}\\
\notag
r^{sc}_{kj} & = r^{cs}_{kj} 
=0
\end{align}
Since $\sum_0^\infty p_k = 1$, if $\bar{r} = \sum_{k=1}^\infty  r^{ss}_{kk}$, we obtain by \eqref{eq:Rss_fin}
\[
x\bar{r} = 1-p_0.
\]
Assume $\bar{r}=0$, then $p_0=1$, which is absurd since $R(s,t)\neq 0$. 
Hence $\bar{r}>0$, and we define
\begin{equation}\label{eq:xdim}
x = \frac{\bar{r}-r^{cc}_{00}}{\bar{r}^2}, 
\end{equation}
Thesis follows by combining \eqref{eq:xdim} and \eqref{eq:Rcc_fin}.

\bigskip

\emph{Sufficiency.} Given the matrices of the 2-D Fourier series as in the theorem assumption,
set $x>0$ as in \eqref{eq:xdim}. Define
\[
p_k = 
r^{ss}_{kk}\frac{\bar{r}-r^{cc}_{00}}{\bar{r}^2}, \qquad
p_0 = \frac{r^{cc}_{00}}{\bar{r}}.
\]
Then $\{p_k\}_{k\geq 0}$ is a non-negative sequence such that $\sum_kp_k=1$. 
Define
\[
x_t = \sqrt{xp_0}Y'_0+\sum_{k=1}^{n}\sqrt{xp_k}(Y_k
\st{k}{t}+ Y'_k 
\ct{k}{t}).
\]
By Theorem~\ref{2.1}, we have
\[
C(s,t) = x\Big(p_0+\sum_{k=1}^{\infty}{p_k\ct{k}{s}\ct{k}{t}}
+ \sum_{k=1}^{\infty}{p_k\st{k}{s}\st{k}{t}}\Big).
\]
It is straightforward to check that \eqref{eq:Rss_fin} and \eqref{eq:Rcc_fin} hold.
The fact that the solution is unique follows immediately from the necessary condition.
\qed\end{proof}

\section{Proof of the Theorem~\ref{teo:Asympthotic}}\label{appB1}
The case $x_t\equiv k$ is obvious. 
Let $ C(t,s) = \tilde{C}(t-s) $  be the covariogram function of $\{x_t\}_{t\in[0,1]}$
(see \eqref{Cov_X} for its expansion). 
Since $x_t\equiv k\iff \tilde{C}(0)=0$, we assume, without loss of generalities,
that $\tilde{C}(0)=1$.

A straightforward computation gives that, if $ \{y_t\}_{t\in[0,1]}\in\H_0 $
is generated by $ \{x_t\}_{t\in[0,1]}\in\H $, 
then  $\{y_t\}_{t\in[0,1]}$ is a Gaussian process with null expectation 
and continuous covariance function
\begin{equation}\label{eq:R}
R(t,s) =  \tilde{C}(t-s) - \frac{\tilde{C}(t)\tilde{C}(s)}{\tilde{C}(0)}
= \tilde{C}(t-s) - {\tilde{C}(t)\tilde{C}(s)}.
\end{equation}

Hence, given the covariogram function $C(s,t)=\tilde{C}(t-s)$ of the generating process $ \{x_t\}_{t\in[0,1]}$, we need to study the spectrum of the operator \eqref{Merc_operator},
where $C$ is replaced by $R$ given in \eqref{eq:R}.

As in \eqref{eq:Ctilde} and \eqref{Cov_X}, we write
$
\tilde{C}(t) = a_0 + 2\sum_{n=1}^{\infty}a_n\cos(2n\pi t)
$ 
with 
$
1 = a_0 + 2\sum_{n=1}^{\infty}a_n
$
since $\tilde{C}(0)=1$.
Let  $f(t)$ be an eigenfunction of \eqref{eq:R}; 
from the expansion theorem (see \cite{ash}) we have in $ L^2[0,1] $,
\begin{equation}\label{eq:expans_f}
f(t) =f_0 +  \sum_{n=1}^{\infty}f_n^{c}\ct{n}{t} + f_n^{s}\st{n}{t}.
\end{equation}
where
$
f_0 = \int_{0}^{1}f(\tau)d\tau,
$
$
f_n^{c} = \int_{0}^{1} \ct{n}{\tau}f(\tau)d\tau 
$
and
$
f_n^{s} = \int_{0}^{1} \st{n}{\tau}f(\tau)d\tau . 
$
Let's look for the eigenvalue related to $f$:
\begin{equation}\label{eq:eigenvalR}
\int_{0}^{1}{R(s,t)f(t)dt} = \int_{0}^{1}{\tilde{C}(t-s)f(t)dt} -\tilde{C}(s) \int_{0}^{1}{\tilde{C}(t)f(t)dt}  = \tilde{a} f(s).
\end{equation}
Substituting \eqref{eq:expans_f} into \eqref{eq:eigenvalR}, and integrating 
with the results in \eqref{eq:CvsS_C} and \eqref{eq:CvsCn}, yields
\begin{equation}\label{eq:num2}
a_0f_0 +\sum_{n=1}^{\infty}a_n(f_n^{c} \ct{n}{s} + f_n^{s} \st{n}{s}) - 
\tilde{C}(s)\Big(a_0 f_0+\d\sum_{n=1}^{\infty}a_n f_n^{c} \Big)  = \tilde{a} f(s).
\end{equation}
\subsection{$\st{n}{s}$ eigenfunctions}
For any $a_n\neq 0$, it is straightforward to see that $f(s)=\st{n}{s}$ is an eigenfunction,
by a direct substitution in \eqref{eq:num2}, and that $\tilde{a}=a_n$.
Moreover, we are going to state more: the only eigenfunctions which contains
some $f_k^{s}\neq 0$ are indeed $\st{n}{s}$ (when $a_n\neq 0$).

Assume that {$\exists k\colon f_k^{s}\neq 0$} and, by contradiction, $f(t)\neq \st{k}{t}$.\\
By multiplying both members of \eqref{eq:num2} by
$ \st{k}{s} $ and integrating, we obtain
$a_kf_k^{s}  = \tilde{a} f_k^{s} $, i.e.,
$
a_k= \tilde{a} .
$
Since $a_k\neq 0$, then $\st{k}{t}$ is an eigenfunction. 
This eigenfunction is orthogonal to
$f(s)$ by Mercer Theorem, and hence
$$
0 = \int_{0}^{1}\st{k}{s}f(s)ds = f_k^{s}.
$$
Summing up, for any $a_n\neq 0$, $\st{n}{t}$ is an eigenfunction associated to
$\tilde{a}=a_n$, and the other eigenfunctions do not contain the terms in $\{\st{n}{t}\}_{n\geq 1}$
(they are even function).

\subsection{The other eigenfunctions of \eqref{eq:num2}.}
To conclude the proof, we should find another sequence of eigenfunctions with 
eigenvalues $\{\tilde{a}_n\}_{n\geq 1}\asymp \{a_n\}_{n\geq 1}$. We will first obtain a simple result on the 
coefficients of the eigenfunctions. Then we will introduce the multiplicity of the
eigenvectors $\{a_n\}_{n\geq 1}$ in order to conclude the proof accordingly.

\medskip

The other eigenfunction takes the form
$
f(t) =f_0 +  \sum_{k=1}^{\infty}f_k^{c}\ct{k}{t}
$.
By multiplying both members of \eqref{eq:num2} by
$ \ct{n}{s} $ and integrating, we obtain
\begin{equation}\label{eq3}
\begin{cases}
a_0 f_0 - a_0(a_0 f_0 +\d\sum_{k=1}^{\infty}a_k f^{c}_k ) =
\tilde{a} f_0,
& n= 0; \\
a_n f^{c}_n - \d a_n(a_0 f_0 +
\d \sum_{k=1}^{\infty}a_k f^{c}_k ) = \tilde{a} f^{c}_n , 
& n>0 .\end{cases} 
\end{equation}
As an immediate consequence, $(a_n = 0) \Rightarrow (f_n^{c}=0)$.
\begin{lemma}\label{lem:sumfc}
$ \{f_n^{c}\}_{n\geq 0} \in \ell^1$, and
$
f_0 + \d\sum_{n=1}^{\infty}f_n^{c}= 0. 
$
\end{lemma}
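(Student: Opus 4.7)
The plan is to exploit two ingredients: the coupled linear recurrences \eqref{eq3} which constrain the Fourier coefficients of $f$, and the boundary condition $y_0=0$ almost surely, which is encoded in the kernel as $R(0,t)\equiv 0$. Introduce the shorthand
\[
S \;:=\; a_0 f_0 + \sqrt{2}\sum_{k=1}^{\infty} a_k f_k^{c},
\]
so that \eqref{eq3} rewrites as $f_0(a_0-\tilde{a}) = a_0 S$ and $f_n^{c}(a_n-\tilde{a}) = \sqrt{2}\,a_n S$ for $n\geq 1$. Recall that $(a_n)_n\in\ell^1$ (this was already used in the proof of Theorem~\ref{2.1}), so in particular $a_n\to 0$. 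Since we are dealing with an eigenfunction associated with a nonzero eigenvalue of the positive semidefinite kernel $R$, we have $\tilde{a}>0$.

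For the $\ell^1$-summability, I would split on whether $S=0$. If $S=0$, then $f_n^{c}\neq 0$ forces $a_n=\tilde{a}$; since $a_n\to 0<\tilde{a}$, this happens for at most finitely many indices, so $(f_n^{c})_n$ is finitely supported and trivially in $\ell^1$. If $S\neq 0$, choose $N$ large enough that $|a_n-\tilde{a}|\geq \tilde{a}/2$ for every $n\geq N$; the recurrence then yields
\[
|f_n^{c}| \;\leq\; \frac{2\sqrt{2}\,|S|}{\tilde{a}}\,|a_n| \qquad (n\geq N),
\]
and $\ell^1$-summability follows at once from $(a_n)_n\in\ell^1$.

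For the vanishing sum, the crucial observation is that $y_0=0$ almost surely, whence $R(0,t)=\mathrm{Cov}(y_0,y_t)=0$ for every $t\in[0,1]$. Substituting $s=0$ in the defining identity $\int_0^1 R(s,t)\,f(t)\,dt = \tilde{a}\,f(s)$ gives $\tilde{a}\,f(0)=0$, and since $\tilde{a}>0$ we conclude $f(0)=0$. The $\ell^1$-bound just proved makes the cosine series $\sum_{n\geq 1} f_n^{c}\,\ct{n}{t}$ uniformly and absolutely convergent on $[0,1]$, so its sum is a continuous function that must coincide with $f$ pointwise (both agree with $f$ in $L^2$ and both are continuous, $f$ being the image of an $L^2$ function under a continuous kernel). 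Evaluating at $t=0$ and using $\ct{n}{0}=\sqrt{2}$ yields $f_0+\sqrt{2}\sum_{n=1}^{\infty} f_n^{c} = f(0) = 0$, which is the second claim.

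The only mildly delicate point is legitimizing the pointwise evaluation of the Fourier series at $t=0$, but this is a direct consequence of the $\ell^1$-bound established in the previous step, so there is no genuine obstacle beyond handling the case split on $S$.
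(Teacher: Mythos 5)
Your proof is correct, and it relates to the paper's argument differently in its two halves. For the $\ell^1$ claim you and the paper exploit the same mechanism: the relation $f_n^{c}(a_n-\tilde{a})=\sqrt{2}\,a_n S$ coming from \eqref{eq3} forces $|f_n^{c}|$ to be dominated by a constant multiple of $a_n$, and $(a_n)_n\in\ell^1$ finishes the job. Your explicit case split on $S=0$ versus $S\neq 0$, together with the choice of $N$ so that $|a_n-\tilde{a}|\geq\tilde{a}/2$ for $n\geq N$, is if anything cleaner than the paper's one-line estimate (whose displayed inequality contains a sign slip). For the identity $f_0+\sqrt{2}\sum_{n\geq 1}f_n^{c}=0$ you genuinely diverge: the paper sums the relations \eqref{eq3} over $n$ and telescopes using the normalization $a_0+2\sum_{n\geq 1}a_n=\tilde{C}(0)=1$, a purely algebraic computation; you instead recognize the left-hand side as the pointwise value $f(0)$ (since $\ct{n}{0}=\sqrt{2}$) and derive $f(0)=0$ from $R(0,\cdot)\equiv 0$, i.e.\ from the fact that the generated process is pinned at the origin. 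Your route needs the extra step of upgrading the $L^2$ identification of $f$ with its cosine series to pointwise equality at $t=0$, which is legitimate precisely because of the $\ell^1$ bound you just proved (uniform convergence plus continuity of $f$ as the image of an $L^2$ function under the continuous kernel $R$); in exchange it is normalization-free and explains \emph{why} the identity holds rather than verifying it by cancellation. Both arguments are complete; no gap.
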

\begin{proof} 
Recall that $a_n\geq 0$, and that $a_0 + 2\sum_{n=1}^{\infty}a_n = \tilde{C}(0) = 1 $.
For $n>0$, by \eqref{eq3}, we have
$$
| f_n^{c} | 
\leq 
\frac{a_n |f^{c}_n| - \d a_n(a_0 |f_0| +
\d \sum_{k=1}^{\infty}a_k |f^{c}_k| )}{\tilde{a}},
$$
and since $\{a_k |f^{c}_k|\}_{k\geq 0}\in \ell^1$ (as a product of two  $\ell^2$ sequences), 
and $\{a_n\}_{n\geq 1}\in \ell^1$, 
we obtain the first part of the thesis. By \eqref{eq3}
and $a_0 + 2\sum_{n=1}^{\infty}a_n = \tilde{C}(0) =1 $, we get
\[
\begin{aligned}
f_0 + \d\sum_{n=1}^{\infty}f_n^{c} & = 
\frac{a_0 f_0 - a_0(a_0 f_0 +\d\sum_{k=1}^{\infty}a_k f^{c}_k )}{\tilde{a}} 
+ 
\d\sum_{n=1}^{\infty}
\frac{a_n f^{c}_n - \d a_n(a_0 f_0 +
\d \sum_{k=1}^{\infty}a_k f^{c}_k )}{\tilde{a}} 
\\
& = 
\frac{a_0 f_0 +\d\sum_{n=1}^{\infty}a_n f^{c}_n }{\tilde{a}} 
- \frac{a_0 f_0 +\d\sum_{k=1}^{\infty}a_k f^{c}_k }{\tilde{a}}
\Big(   a_0 + 2\sum_{n=1}^{\infty}a_n \Big)
=0.  
\end{aligned}
\]
\qed\end{proof}
\begin{definition}[Multiplicity and support]
Given $\{a_n\}_{n\geq 1}$, we define the \emph{support} $S_{\tilde{a}}$ of $\tilde{a}$:
\[
S_{\tilde{a}} = \{k \colon a_k = \tilde{a}\}.
\]
The \emph{multiplicity} $m_{\tilde{a}} $ of a number $\tilde{a}>0$ is  
the cardinality of $S_{\tilde{a}}$:
\[
m_{\tilde{a}} = \#\{k \colon a_k = \tilde{a}\}.
\]
\end{definition}
It is clear that $m_{\tilde{a}}<\infty$ because $\{a_n\}_{n\geq 1}\in\ell^1$.
\begin{lemma}
If $m_{\tilde{a}} = k>0$, then there are exactly $k-1$ orthogonal eigenfunctions 
of $R$ related to $\tilde{a}$. 
Moreover for anyone of these $k-1$ eigenfunctions, 
\[
n \not\in S_{\tilde{a}} \qquad \Longrightarrow\qquad f_n^{c} = 0 .
\] 
\end{lemma}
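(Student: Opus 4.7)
The plan is to analyze the $\tilde{a}$-eigenspace of the cosine-type eigenfunctions of $R$ directly from the system \eqref{eq3}. Setting $K := a_0 f_0 + \d\sum_{k\geq 1} a_k f_k^c$, those equations read $(a_0-\tilde{a})f_0 = a_0 K$ and $(a_n-\tilde{a})f_n^c = \d a_n K$ for $n\geq 1$. Since $m_{\tilde{a}} = k > 0$, the support $S_{\tilde{a}}$ is nonempty, so pick any $n_0\in S_{\tilde{a}}$. The corresponding equation has left-hand side equal to zero while its right-hand side is a positive multiple of $K$ (because $a_{n_0}=\tilde{a}>0$), so $K=0$.

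Substituting $K=0$ back into the same equations yields $(a_n-\tilde{a})f_n^c=0$ for $n\geq 1$ and $(a_0-\tilde{a})f_0=0$, forcing every coefficient indexed outside $S_{\tilde{a}}$ to vanish. This is exactly the second assertion of the lemma. The eigenfunction is therefore parameterized by the $k$ coefficients indexed by $S_{\tilde{a}}$, subject to the further relation $f_0+\d\sum_{n\geq 1}f_n^c=0$ supplied by Lemma~\ref{lem:sumfc}. Restricted to the support, this relation is precisely $K=0$ rewritten using $a_n=\tilde{a}$ on $S_{\tilde{a}}$, and is a single non-trivial linear form on the $k$ parameters (its coefficient is $1$ on $f_0$ if $0\in S_{\tilde{a}}$ and $\d$ on every $f_n^c$ with $n\in S_{\tilde{a}}\cap\{1,2,\ldots\}$). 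Hence the eigenspace has dimension at most $k-1$.

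For the converse inequality, any $f$ supported on $S_{\tilde{a}}$ satisfying $K=0$ trivially solves \eqref{eq3} with eigenvalue $\tilde{a}$: on $S_{\tilde{a}}$ one has $a_n-\tilde{a}=0$ and $K=0$ so both sides vanish, while off $S_{\tilde{a}}$ the coefficients are zero. Thus the dimension is exactly $k-1$, and orthogonal representatives may be chosen inside this $(k-1)$-dimensional subspace. The only minor obstacle is checking that the single Lemma~\ref{lem:sumfc} constraint is not automatically satisfied on the restricted parameter space; this is immediate from the form of the linear functional just exhibited, so the argument closes without further work.
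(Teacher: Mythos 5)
Your proof is correct and follows essentially the same route as the paper's: both work from the projected system \eqref{eq3}, use an index in the nonempty support to force $a_0f_0+\sqrt{2}\sum_{k\geq 1} a_kf_k^{c}=0$, deduce that all coefficients vanish off $S_{\tilde a}$, and identify the eigenspace as the kernel of a single nontrivial linear form on the $k$ supported coefficients. If anything, your write-up is more explicit than the paper's on the existence half (the converse verification and the nontriviality of the constraint), which the paper dispatches with ``it is simple to prove.''
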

\begin{proof}
Let $\tilde{a}>0$ be such that $m_{\tilde{a}}>1$.

It is simple to prove that there always exist
$m_{\tilde{a}}-1$ orthogonal eigenfunctions related to $\tilde{a}$ with
$f_n^{c} = 0$ if $a_n\not\in S_{\tilde{a}}$.
We have two possibilities:
\begin{itemize}
\item $0\in S_{\tilde{a}}$ or, equivalently, $a_0 = \tilde{a}$. 
In this case, \eqref{eq3} is equivalent to the following system
\[
\begin{cases}
f_n^{c}=0, & n \not\in S_{\tilde{a}}\\
\tilde{a} \big(f_0 + \d
\sum_{n\in S_{\tilde{a}}\setminus\{0\}}  f_n^{c} \big)=0.
\end{cases}
\]
\item $0\not\in S_{\tilde{a}}$. 
In this case, \eqref{eq3} is equivalent to the following system
\[
\begin{cases}
f_n^{c}=0, & n \not\in S_{\tilde{a}}\\
\tilde{a} \big(\sum_{n\in S_{\tilde{a}}}  f_n^{c} \big)=0.
\end{cases}
\]
\end{itemize}
In both cases, there exist a $k-1$-dimensional orthogonal basis for the solution
system.

We now need to prove that there are not other eigenfunctions related to $\tilde{a}$. 
Assume that $f_{\bar{n}}^{c}\neq 0$. 
We recall that this fact implies $a_{\bar{n}}\neq 0$.
If $\bar{n}=0$, from \eqref{eq3} we have that 
\[
\begin{cases}
\frac{(a_0 -\tilde{a} )f_0}{a_0}
 = a_0 f_0 +\d\sum_{k=1}^{\infty}a_k f^{c}_k ,
\\
\frac{(a_n -\tilde{a} )f^{c}_n }{\d a_n}
= a_0 f_0 +
\d \sum_{k=1}^{\infty}a_k f^{c}_k  , 
& n \in S_{\tilde{a}}
\end{cases}
\]
The second equation shows that $a_0 f_0 +
\d \sum_{k=1}^{\infty}a_k f^{c}_k=0$, since $a_n = \tilde{a}$, and hence $a_0=\tilde{a}$,
which means that $\bar{n}\in S_{\tilde{a}}$.
Analogously, if $\bar{n}\neq 0$, from \eqref{eq3} we can prove that
$\bar{n}\in S_{\tilde{a}}$, that completes the proof.
\qed\end{proof}

Let $\{a_{(n)}\}_{n\geq 1}$ be the decreasing reordering of the sequence $\{a_n\}_{n\geq 1}$, positive and 
without repetition:
$a_{(1)}>a_{(2)}>\cdots>a_{(n)}>\cdots$ and $\forall a_n>0$, exists $k$ such that 
$a_n =a_{(k)}$.
To conclude the proof, we must find a sequence of eigenvalues $\{\tilde{a}_n\}_{n\geq 1}$ such
that $a_{(n)}>\tilde{a}_n>a_{(n+1)}$. 

\begin{lemma}
For each $n\in\mathbb{N}$, there exists a unique eigenvalue $\tilde{a}_n$ such that
$a_{(n)}>\tilde{a}_n>a_{(n+1)}$. Moreover, $m_{\tilde{a}_n}=1$.
\end{lemma}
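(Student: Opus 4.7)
My plan is to reduce the problem to a classical secular equation. The previous lemma already accounts for the $\sin$-eigenfunctions and the $(m_{a}-1)$ degenerate cosine eigenfunctions associated with each value $a_n$, so I only need to look for eigenvalues $\tilde{a}>0$ with $\tilde{a}\notin\{a_n:n\geq 0\}$ and (necessarily even) eigenfunction $f(t)=f_0+\sum_k f_k^c\ct{k}{t}$. Setting
\[
A := a_0 f_0 + \sqrt{2}\sum_{k\geq 1} a_k f_k^c,
\]
the system \eqref{eq3} becomes $(a_0-\tilde{a})f_0 = a_0 A$ and $(a_n-\tilde{a})f_n^c = \sqrt{2}\,a_n A$ for $n\geq 1$. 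Since $a_n\neq\tilde{a}$, I can solve explicitly
\[
f_0 = \frac{a_0 A}{a_0-\tilde{a}},\qquad f_n^c = \frac{\sqrt{2}\,a_n A}{a_n-\tilde{a}},
\]
and a non-trivial eigenfunction forces $A\neq 0$; the whole $f$ is then determined up to this single scalar, which will give multiplicity one for free.

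I would next impose the constraint from Lemma~\ref{lem:sumfc}, $f_0+\sqrt{2}\sum_{n\geq 1}f_n^c=0$, and divide by $A$. Grouping indices that share the same value $a_{(m)}$, this yields the secular equation
\[
F(\tilde{a}) := \sum_{m} \beta_m \frac{a_{(m)}}{a_{(m)}-\tilde{a}} = 0, \qquad \beta_m := \sum_{n:\,a_n=a_{(m)}} w_n > 0,
\]
with weights $w_0=1$ and $w_n=2$ for $n\geq 1$. Because $(a_n)_n\in\ell^1$ and $a_{(m)}/(a_{(m)}-\tilde{a})\sim -a_{(m)}/\tilde{a}$ as $a_{(m)}\to 0$, the series converges absolutely on every compact subset of $\mathbb{R}\setminus\{a_{(m)}\}_m$, so $F$ is real-analytic there.

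I would then fix an interval $(a_{(n+1)},a_{(n)})$ and argue by monotonicity. Differentiating termwise yields $F'(\tilde{a})=\sum_m \beta_m a_{(m)}/(a_{(m)}-\tilde{a})^2>0$, so $F$ is strictly increasing on the interval. Only the two boundary terms $\beta_n a_{(n)}/(a_{(n)}-\tilde{a})$ and $\beta_{n+1} a_{(n+1)}/(a_{(n+1)}-\tilde{a})$ develop singularities at the endpoints, while the remaining tail stays uniformly bounded by $\ell^1$-summability of $(a_n)_n$. Hence $F(\tilde{a})\to+\infty$ as $\tilde{a}\uparrow a_{(n)}$ and $F(\tilde{a})\to-\infty$ as $\tilde{a}\downarrow a_{(n+1)}$, and the intermediate value theorem combined with strict monotonicity produces a unique zero $\tilde{a}_n\in(a_{(n+1)},a_{(n)})$. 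The associated eigenspace being one-dimensional by construction forces $m_{\tilde{a}_n}=1$.

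The final step is to check that $\tilde{a}_n$ really corresponds to an $L^2$ eigenfunction of the operator with kernel $R$. Since $|a_n-\tilde{a}_n|$ is bounded below by a positive constant for all but finitely many $n$, the formula for $f_n^c$ yields $f_n^c=O(a_n)$, hence $(f_n^c)_n\in\ell^1\subset\ell^2$; this guarantees that the Fourier series defines a genuine $L^2$ function and that one may interchange sum and integral in \eqref{eq:num2} to verify the eigenvalue identity. The delicate point in the argument is the boundary analysis of $F$: one has to rule out that an oscillation hidden in the infinite tail destroys the $\pm\infty$ limits at the endpoints or creates an extra zero inside the interval, and this is precisely where $\ell^1$-summability of $(a_n)_n$ is essential.
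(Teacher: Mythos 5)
Your proof is correct and follows essentially the same route as the paper: reduce to the explicit coefficient formulas $f_0=a_0A/(a_0-\tilde a)$, $f_n^{c}=\sqrt{2}\,a_nA/(a_n-\tilde a)$, derive a scalar secular equation, and locate a unique root in each gap $(a_{(n+1)},a_{(n)})$ via strict monotonicity and the $\pm\infty$ limits at the endpoints. The only real difference is which scalar equation you use: you impose the constraint $f_0+\sqrt{2}\sum_n f_n^{c}=0$ of Lemma~\ref{lem:sumfc} to get $F(\tilde a)=\frac{a_0}{a_0-\tilde a}+2\sum_{n\ge1}\frac{a_n}{a_n-\tilde a}=0$, whereas the paper imposes the self-consistency $a_0f_0+\sqrt{2}\sum_n a_nf_n^{c}=c$ to get $S(\tilde a)=\frac{a_0^2}{a_0-\tilde a}+2\sum_{n\ge1}\frac{a_n^2}{a_n-\tilde a}=1$; these are equivalent because $S(x)=\bigl(a_0+2\sum_{n\ge1}a_n\bigr)+xF(x)=1+xF(x)$ under the normalization $\tilde C(0)=1$, and it is the self-consistency form that you implicitly need when you "verify the eigenvalue identity" at the end, since $F(\tilde a)=0$ is a priori only a necessary condition extracted from Lemma~\ref{lem:sumfc}.
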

\begin{proof}
We have already observed that $a_0 f_0 +\d\sum_{k=1}^{\infty}a_k f^{c}_k=0$
implies, for any $n$, $a_n=\tilde{a}$ or $f_n^{c}=0$. Hence, 
without loss of generalities, we assume $a_0 f_0 +\d\sum_{k=1}^{\infty}a_k f^{c}_k=c
\neq 0$ and we continue the proof. From \eqref{eq3}, we obtain
\begin{equation}\label{eq3b}
f_0 
= c\frac{a_0}{a_0 - \tilde{a}},
\qquad f^{c}_n 
= c\frac{\d a_n }{a_n - \tilde{a}}.
\end{equation}
These relations with, again, $a_0 f_0 +\d\sum_{n=1}^{\infty}a_n f^{c}_n=c$, imply
\begin{equation}\label{eq3c}
\frac{a_0^2}{a_0 - \tilde{a}} + 2\sum_{n=1}^{\infty} \frac{ a_n^2 }{a_n - \tilde{a}}=1.
\end{equation}
We are going to show that there exists a unique solution $\tilde{a}_n$ of \eqref{eq3c}
such that $a_{(n)}>\tilde{a}_n>a_{(n+1)}$. This solution is the searched eigenvalue, 
whose corresponding eigenfunction' expansion is given in \eqref{eq3b}. 

Let us consider the series
\[
S(x) = \frac{a_0^2}{a_0 - x} + 2\sum_{n=1}^{\infty} \frac{ a_n^2 }{a_n - x}
\]
and the derivative series
\[
S'(x) = \frac{a_0^2}{(a_0 - x)^2} + 2\sum_{n=1}^{\infty} \frac{ a_n^2 }{(a_n - x)^2}
\]
then they converge absolutely in each compact set not containing 
$\{a_n\}_{n\geq 1}$.  We have that 
\[
\text{dom}(S) = \text{dom}(s) = \cup_n (a_{(n+1)},a_{(n)}) , 
\qquad S'(x) = s(x),  \forall x \in \text{dom}(S).
\]
Moreover for each $n$,
\[
\lim_{x\to a_{(n+1)}^+} S(x) = -\infty, \qquad \lim_{x\to a_{(n)}^-} S(x) = +\infty, \qquad 
S'(x) > 0 , \forall x\in  (a_{(n+1)},a_{(n)}) .
\]
Hence, there exists a unique $\tilde{a}_n\in (a_{(n+1)},a_{(n)}) $ 
such that $S(\tilde{a}_n)=1$, i.e.\ for which \eqref{eq3c} holds. 
The unique corresponding eigenfunction is given by \eqref{eq3b}, that implies also
$m_{\tilde{a}_n}=1$:
\[
f (t) = 
\frac{a_0}{a_0 - \tilde{a}_n} + \d \sum_{n=1}^\infty
\frac{a_n }{a_n - \tilde{a}_n} \ct{n}{t}.
\]
To complete the proof,
we show that there are not eigenvalues greater then $a_{(1)}=\max_n a_{n}$
or smaller than any $a_{n}>0$.

In fact, if we assume that there exists an eigenvalue $\hat{a} > \max a_n$, then
\eqref{eq3b} shows that the sequence $\{f^{c}_k\}_{k\geq 0}$ is made of either nonnegative or 
nonpositive
numbers, that together with Lemma~\ref{lem:sumfc} implies $f^{c}_k=0$, for any $f$.

In the same way it can be shown that there are no eigenvalues smaller than any $a_n>0$.
\qed\end{proof}

\section{Proofs of results of Section~\ref{sec:5s}}\label{appC}
We simply deduce the results basing on the fact that if $ Y\approx N(0,\sigma^2) $, then 
$
E(|Y|^p) = \sigma^p \frac{2^{\frac{p}{2}}\Gamma\big(\frac{p+1}{2}\big)}{\sqrt{\pi}} ,
$ (see, e.g., \cite{normale}).
\begin{proof}[Proof of the Theorem~\ref{teo:2.4}]
Observe that
$$
E(|x_{t+h} - x_t|^2) = E(x_t^2 + x_{t+h}^2 - 2x_{t+h}x_{t})  =
 R(t+h,t+h) + R(t,t) - 2R(t+h,t) .
$$
Since there exists an $M$ such that
$
|R(s+\delta_1,t+\delta_2)-R(s,t)|\leq M \| (\delta_1,\delta_2) \|^{\alpha}
$,
then there exists a $D$ such that
$$
E(|x_{t+h} - x_t|^2) \leq D|h|^{\alpha}.
$$
The thesis follows.

\qed\end{proof}
\begin{proof}[Proof of the Theorem~\ref{teo:RAndC}]
The first part of the theorem is a simple calculation. The second holds
is a consequence of Theorem~\ref{teo:2.3}, since
$$
\begin{aligned}
E(|x_{t+h} - x_t|^2) 
& = 
E(E(|x_{t+h} - x_t|^2|x_0)) =
 E(E((x_{t+h}- x_0) - (x_t-x_0))^2|x_0)) 
 \\
& = 
R(t+h,t+h) + R(t,t) - 2R(t+h,t)  \leq D|h|^{\alpha}.
\end{aligned}
$$
\qed\end{proof}

\begin{proof}[Proof of the Theorem~\ref{teo:5.4}]
It is clear that 
$$
\partial^2 \tilde{C}(\delta) = 2\partial^2 \sum_{k=1}^{\infty}{c^2_k\cos(2k\pi(\delta))} = -2\sum_{k=1}^{\infty}{(2\pi)^2k^2c^2_k\cos(2k\pi(\delta))}
$$
and that $ \partial^2 \tilde{C} \in C^{0, \alpha} ([0,1])$, for some $ 0<\alpha\leq1 $.
Moreover we have that uniformly in $t$ and in mean square
$$
x_t = c_0Y'_0 + \sum_{k=1}^{\infty}c_k(Y_k\st{k}{t} + Y'_k\ct{k}{t}).
$$
and, from Theorem~\ref{2.1}, there also exist a stochastic process in $ \H $ such that uniformly in $t$ and in mean square
$$
\tilde{x}_t = 2\pi\sum_{k=1}^{\infty}kc_k(Y_k\ct{k}{t} - Y'_k\st{k}{t}),
$$
which has covariogram function belonging to $C^{0, \alpha} ([0,1])$ given by
$$
\tilde{\bar C}(\delta) =  2\sum_{k=1}^{\infty}{(2\pi)^2k^2c^2_k\cos(2k\pi(\delta))}.
$$
If we define
$$
\begin{aligned}
y^{(n)}_t &:= c_0Y'_0 +  \sum_{k=1}^{n}c_k(Y_k\st{k}{t} + Y'_k\ct{k}{t})
\\
\tilde{y}^{(n)}(t) &:= 2 \pi\sum_{k=1}^{n}kc_k(Y_k\ct{k}{t} - Y'_k\st{k}{t}),
\end{aligned}
$$
than $
y^{(n)}_t = {y}^{(n)}_0 + \int_{0}^{t}\tilde{y}^{(n)}_{\tau}d\tau
$,
a.s. for any $n$, while
for each fixed $t$, in mean square we have
$
  \int_{0}^{t}\tilde{y}^{(n)}_{\tau}d\tau \to \int_{0}^{t}\tilde{x}_{\tau}d\tau.
$
Since
\begin{multline*}
\sqrt{E\Big([x_t - x_0 -  \int_{0}^{t}\tilde{x}_{\tau}d\tau]^2\Big)}  
\leq \sqrt{E\Big([x_t - y^{(n)}_t]^2\Big)} 
+ \sqrt{E\Big([y^{(n)}_0 
+ \int_{0}^{t}\tilde{y}^{(n)}_{\tau}d\tau- x_0 - \int_{0}^{t}\tilde{x}_{\tau}d\tau]^2\Big)}
\mathop{\longrightarrow}_{n\to\infty} 0, 
\end{multline*}
it follows that a.s. 
$
x_t = x_0 + \int_{0}^{t}\tilde{x}_{\tau}d\tau .
$
By Theorem~\ref{teo:2.4} we know that almost all trajectory path of $ \tilde{x}_t$  belongs to $C^{0, \beta} ([0,1])$, with   $ \beta < \frac{\alpha}{2} $,
and thesis follows.
\qed\end{proof}

\section{Proofs of results of Section~\ref{sec:6s}}\label{appD}
\begin{proof}[Proof of Theorem~\ref{model:asympt}]
Assume $p_0$ be the true parameter, we may define a sequence
of i.i.d.\ random variables $\{Z_k\}_{k\geq 0}$ in the following way:
\begin{equation}\label{eq:Zk}
Z_k \sim \frac{k^{2p_0} o_k}{a_0^2} \sim \chi^2_2 \sim \exp( \tfrac{1}{2}) .
\end{equation}
Equation~\eqref{eq:MLE_p}, as a function of $(p,p_0)$ and $\{Z_k\}_{k\geq 1}$, becomes
\[
\frac{\partial \ell_n}{\partial p} =
\sum_{k=1}^n 
\log(k)\Big( 2
-k^{2(p-p_0)} Z_k
\Big).
\]
With the notation of \cite[pp.~155-161]{Hall_Heyde}, we have
\begin{align*}
I_n(p) & = \sum_1^n \log^2(k) E\Big( (2
-k^{2(p-p_0)} Z_k)^2| Z_1,\ldots, Z_{k-1} \Big) \\
& =
\sum_1^n \log^2(k) 2( 1 + ( 1 - 2k^{2(p-p_0)})^2 ) ,
\\
J_n(p) & = -\frac{2}{a_0^2}
\sum_{k=1}^n \log^2(k) k^{2p} o_k = 
-2
\sum_{k=1}^n \log^2(k) k^{2(p-p_0)} Z_k
\end{align*}
and, in particular,
\begin{align}
\label{eq:Inp0}
I_n(p_0) & = 4 \sum_1^n \log^2(k) ,
\\
\label{eq:Jnp0}
J_n(p_0) & = -2 \sum_1^n \log^2(k) Z_k .
\end{align}

The thesis is a consequence of \cite[pp.~155-161]{Hall_Heyde},
where the Assumption~1 and Assumption~2 on page 160
guarantee the existence of a ML
estimator $\{\hat{p}_n\}_{n\geq 1}$ such that
\[
\hat{p}_n \mathop{\longrightarrow}\limits^{a.s.}_{n\to\infty} p_0, \qquad
\frac{\hat{p}_n-p_0}{\sqrt{I_n(p_0)}} \mathop{\longrightarrow}\limits^{L}_{n\to\infty}
N(0,1).
\]

\noindent\textbf{Check of \cite[Assumption~1, p.~160]{Hall_Heyde}}.
The fact that $I_n(p_0) \mathop{\longrightarrow}\limits^{a.s.}_{n\to\infty} \infty$ 
is a consequence of \eqref{eq:Inp0}. As $I_n(p_0)=E(I_n(p_0))$, then 
$I_n(p_0)/E(I_n(p_0))\to 1$ uniformly on compacts. By \eqref{eq:Inp0} and \eqref{eq:Jnp0}, 
we have
\[
\frac{J_n(p_0)}{I_n(p_0)} = \frac{-2 \sum_1^n \log^2(k) Z_k}{ 4 \sum_1^n \log^2(k)},
\]
and hence, by \eqref{eq:Zk}, we have
\[
E\Big( \frac{J_n(p_0)}{I_n(p_0)} \Big) = 1, \qquad 
Var\Big( \frac{J_n(p_0)}{I_n(p_0)} \Big) = \frac{\sum_1^n \log^4(k)}{\big(\sum_1^n \log^2(k))^2}
\]
Since, for $n\geq 4$,
\begin{equation}\label{eq:dis_InJn}
\frac{ \log^2(n) }{ \sum_1^n \log^2(m)} 
 \leq
\frac{ 1 }{ \sum_{n/2}^n \big(\frac{\log(n/2)}{\log(n)}\big)^2} 
\leq
\frac{ 1 }{ \sum_{n/2}^n \big(\frac{1}{2}\big)^2} 
\leq
\frac{8}{n} 
\end{equation}
then $\sum_{n=1}^\infty  \big(\frac{\log^2(n)}{{\sum_1^n \log^2(m)}}\big)^{2}<\infty$, 
and hence
\(
Var\big( \frac{J_n(p_0)}{I_n(p_0)} \big) \to 0
\) by Kronecker's Lemma,
which ensures that $I_n(p_0)/E(I_n(p_0))\to -1$ in probability uniformly on compacts.

\noindent\textbf{Check of \cite[Assumption~2, p.~160]{Hall_Heyde}}.
Since, for any $p$, $E_{p}(I_n(p))$ does not change, then Assumption~2.i) is automatically
satisfied.

Now, if $|p_n-p_0|\leq \delta/\sqrt{I_n(p_0)}$, we get
\begin{equation}\label{eq:incr_Jn}
|J_n(p_n) - J_n(p_0) | \leq 
2
\sum_{k=1}^n \log^2(k) \big(k^{\frac{\delta}{\sqrt{\sum_1^n \log^2(m)}}}-1\big) Z_k
\end{equation}
\[
|I_n(p_n) - I_n(p_0) | \leq 
\sum_{k=1}^n 
\log^2(k) 8 k^{2\frac{\delta}{\sqrt{I_n(p_0)}}} (k^{2\frac{\delta}{\sqrt{I_n(p_0)}}}-1)
\]
Note that, since $k\leq n$, we have
\begin{equation*}
1\leq k^{2\frac{\delta}{\sqrt{I_n(p_0)}}} \leq e^{2\frac{\delta}{\sqrt{I_n(p_0)}}\log(n)}
\leq \exp(2\delta)
\end{equation*}
and hence, for sufficient large $n$ and $k\leq n$, 
since 
\begin{equation}\label{diseq:k^delta/In}
k^{2\frac{\delta}{\sqrt{I_n(p_0)}}}-1
\leq C_0 {2\frac{\delta}{\sqrt{I_n(p_0)}}\log(k)},
\end{equation} 
we obtain
\[
\Big|\frac{I_n(p_n) - I_n(p_0)}{I_n(p_0)} \Big| \leq 
\frac{C_1\frac{\sum_{k=1}^n \log^3(k)}{\sqrt{\sum_1^n \log^2(k)}}}{4\sum_1^n \log^2(k)}
= C_2\sum_{k=1}^n  \Big(\frac{\log^2(k)}{{\sum_1^n \log^2(m)}}\Big)^{\frac{3}{2}}
\]
By \eqref{eq:dis_InJn},
then $\sum_{n=1}^\infty  \big(\frac{\log^2(n)}{{\sum_1^n \log^2(m)}}\big)^{\frac{3}{2}}<\infty$, 
and hence, by Kronecker's Lemma, we get Assumption~2.ii), namely
\[
\Big|\frac{I_n(p_n) - I_n(p_0)}{I_n(p_0)} \Big| \to 0.
\]

The last Assumption~2.iii) requires that 
\[
 \frac{J_n(p_n) - J_n(p_0)}{I_n(p_0)} 
\to 0, \qquad \text{a.s.}
\]
To check this, we first note that
\[
\frac{\sum_{k=1}^n \log^2(k) \big(k^{\frac{\delta}{\sqrt{\sum_1^n \log^2(m)}}}-1\big) }
{\sum_1^n \log^2(k)} \to 0,
\]
as a consequence of Kronecker's Lemma, \eqref{diseq:k^delta/In}
and \eqref{eq:dis_InJn}. Then
\[
\Big| E \Big(\frac{J_n(p_n) - J_n(p_0)}{I_n(p_0)} \Big)\Big| 
\leq \frac{E(|J_n(p_n) -J_n(p_0)|)}{I_n(p_0)} \to 0,
\]
and hence, a sufficient condition for
\(
 \frac{J_n(p_n) - J_n(p_0)}{I_n(p_0)}  
\to 0
\) to hold, is that
\begin{equation}\label{eq:varJ_nto0}
Var \Big(\frac{J_n(p_n) - J_n(p_0)}{I_n(p_0)} \Big)
\to 0.
\end{equation}
By \eqref{eq:incr_Jn}, since $Var(X_k) = 4$, we obtain
\[
Var ({J_n(p_n) - J_n(p_0)}) 
\leq 8
\sum_{k=1}^n \log^4(k) \big(k^{\frac{\delta}{\sqrt{\sum_1^n \log^2(m)}}}-1\big)^2.
\]
Again, by \eqref{diseq:k^delta/In}, we obtain
\[
Var \Big(\frac{J_n(p_n) - J_n(p_0)}{I_n(p_0)} \Big)
\leq 
\frac{C_1\frac{\sum_{k=1}^n \log^6(k)}{{\sum_1^n \log^2(k)}}}{\Big(4\sum_1^n \log^2(k)\Big)^2}
= C_2\sum_{k=1}^n  \Big(\frac{\log^2(k)}{{\sum_1^n \log^2(m)}}\Big)^{3}
\]
As above, by \eqref{eq:dis_InJn} and Kronecker's Lemma, we obtain \eqref{eq:varJ_nto0}.

\bigskip 

We sketch the second part of the proof, 
with the notation of \cite[pag.191]{Heyde}.
If we define
\[
\mathbf{G}_n(\boldsymbol{\theta}) = 
\left\{
\begin{aligned}
G_n^{(1)}(a,p)& = \frac{1}{a}  
\sum_{k=1}^n \Big(\frac{k^{2p}}{a^2} o_k -2\Big)
= \frac{1}{a}  
\sum_{k=1}^n \Big(
\frac{k^{2(p-p_0)}a_0^2}{a^2} Z_k -2
\Big)
\notag 
\\
G_n^{(2)}(a,p) & = 
\sum_{k=1}^n 
\log(k) \Big( 2
-\frac{k^{2p}}{a^2} o_k\Big) 
= 
\sum_{k=1}^n 
\log(k) \Big( 2
-\frac{k^{2(p-p_0)}a_0^2}{a^2} Z_k \Big) 
\end{aligned}
\right.
\]
and
\[
\mathbf{H}^{-1}_n(a_0,p_0) =
\begin{pmatrix}
\frac{a_0}{2\sqrt{n}}
& 0
\\
0 
& 
\frac{1}{2\sqrt{\sum_{k=1}^n 
\log^2(k)}}
\end{pmatrix}
\]
it is simple to state that
\begin{equation}\label{eq:asymp_G}
\mathbf{H}^{-1}_n(a_0,p_0)\cdot
\mathbf{G}_n(a_0,p_0)
\mathop{\longrightarrow}\limits_{n\to\infty}^L
\begin{pmatrix}
1\\
-1
\end{pmatrix}Z.
\end{equation}
In fact, since $\{Z_k\}_{k\geq 0}$ is a i.i.d.\ sequence of random variables with mean $2$ and variance $4$
(see \eqref{eq:Zk}), we get,
\[
E( G_n^{(1)}(a_0,p_0) G_n^{(2)}(a_0,p_0) ) = 
- E\Big( 
\frac{1}{a_0}  
\sum_{k=1}^n 
\log(k) ( 2
- Z_k )^2
\Big) = -\frac{4}{a_0}  
\sum_{k=1}^n \log(k) 
\]
and hence
\[
Corr\Big(
\frac{a_0}{2\sqrt{n}}
G_n^{(1)}(a_0,p_0) , 
\frac{G_n^{(2)}(a_0,p_0)
}{2\sqrt{\sum_{k=1}^n 
\log^2(k)}}
\Big) =
\frac{ -\sum_{k=1}^n \log(k) }{
\sqrt{n}\sqrt{\sum_{k=1}^n 
\log^2(k)}
} \mathop{\longrightarrow}\limits_{n\to\infty} -1.
\] 
Now, since
\[
\begin{aligned}
\dot{\mathbf{G}}(a_0,p_0)
&= 
\begin{pmatrix}
-\frac{4n}{a_0^2} \Big( 1 +\frac{3}{4}  \frac{\sum_{k=1}^n (Z_k-2)}{n} \Big)
& \frac{4 \sum_{k=1}^n 
\log(k)}{a_0} \Big( 1 +
\frac{\sum_{k=1}^n 
\log(k) (Z_k-2)}{ 2\sum_{k=1}^n \log(k) } \Big)
\\
\frac{4 \sum_{k=1}^n 
\log(k)}{a_0} \Big( 1 +
\frac{\sum_{k=1}^n 
\log(k) (Z_k-2)}{ 2\sum_{k=1}^n \log(k) } \Big)
 & 
-4 \sum_{k=1}^n \log^2(k)
\Big( 1 + \frac{\sum_{k=1}^n 
\log^2(k) (Z_k -2)}{ 2\sum_{k=1}^n \log^2(k) } 
\end{pmatrix}
\\
&  \asymp
\begin{pmatrix}
-\frac{4n}{a_0^2} 
& \frac{4 \sum_{k=1}^n 
\log(k)}{a_0} 
\\
\frac{4 \sum_{k=1}^n 
\log(k)}{a_0} 
 & 
-4 \sum_{k=1}^n \log^2(k)
\end{pmatrix}
\end{aligned}
\]
then, by \eqref{eq:asymp_G} (see \cite[pag.191]{Heyde}), we get
\[
\mathbf{H}^{-1}_n(a_0,p_0) \cdot (- \dot{\mathbf{G}}(a_0,p_0))
\cdot 
\begin{pmatrix}
\hat{a}_n-a_0
\\
\hat{p}_n-p_0
\end{pmatrix}
\mathop{\longrightarrow}\limits^{L}_{n\to\infty}
\begin{pmatrix}
1
\\
-1
\end{pmatrix}
Z,
\]
which is the thesis, once the conditions of uniformly boundedness are checked as for 
the previous case.
\qed\end{proof}

\end{document}